\titleformat{\section}{\Large\bfseries}{\thesection.}{4pt}{}
\titleformat{\subsection}{\large\bfseries}{\thesection.\arabic{subsection}.}{4pt}{}
\titleformat{\subsubsection}{\bfseries}{\thesection.\arabic{subsection}.\arabic{subsubsection}.}{4pt}{}
\titleformat*{\paragraph}{\bfseries}
\titleformat*{\subparagraph}{\bfseries}
\newtheorem{theorem}{Theorem}[section]
\newtheorem{lemma}[theorem]{Lemma}
\newtheorem{proposition}[theorem]{Proposition}
\theoremstyle{definition}
\newtheorem{definition}[theorem]{Definition}
\newtheorem{remark}[theorem]{Remark}
\newcommand{\RN}{\mathbb{R}^N}
\newcommand{\Rb}{\mathbb{R}}
\newcommand{\Vc}{\mathcal{V}}
\newcommand{\Oc}{\mathcal{O}}
\newcommand{\Cc}{\mathcal{C}}
\newcommand{\Dc}{\mathcal{D}}
\newcommand{\Ls}{\mathscr{L}}
\newcommand{\Am}{\mathbf{A}_m}
\numberwithin{equation}{section}
\title[Blowup for a higher order semilinear parabolic equation] 
      {Construction of type I blowup solutions for a higher order semilinear parabolic equation}
\author[T. Ghoul, V. T. Nguyen, H. Zaag]{}
\subjclass{Primary: 35K50, 35B40; Secondary: 35K55, 35K57.}
 \keywords{Higher order parabolic equation, Blowup solution, Blowup profile, Stability}
 \email[T. Ghoul]{teg6@nyu.edu}
 \email[V. T. Nguyen]{Tien.Nguyen@nyu.edu}
 \email[H. Zaag]{Hatem.Zaag@univ-paris13.fr}
\thanks{
\today}
\begin{document}
\maketitle

\centerline{\scshape Tej-Eddine Ghoul$^\dagger$, Van Tien Nguyen$^\dagger$ and Hatem Zaag$^\ast$}
\medskip
{\footnotesize
 \centerline{$^\dagger$New York University in Abu Dhabi, P.O. Box 129188, Abu Dhabi, United Arab Emirates.}
  \centerline{$^\ast$Universit\'e Paris 13, Sorbonne Paris Cit\'e, LAGA, CNRS (UMR 7539), F-93430, Villetaneuse, France.}
}

\bigskip

\begin{abstract} We consider the higher-order semilinear parabolic equation
$$
\partial_t u = -(-\Delta)^{m} u + u|u|^{p-1},
$$
in the whole space $\RN$, where $p > 1$ and $m \geq 1$ is an odd integer.  We exhibit type I non self-similar blowup solutions for this equation and obtain a sharp description of its asymptotic behavior. The method of construction relies on the spectral analysis of a non self-adjoint linearized operator in an appropriate scaled variables setting. In view of known spectral and sectorial properties of the linearized operator obtained by Galaktionov \cite{Grsl01}, we revisit the technique developed by Merle-Zaag \cite{MZdm97} for the classical case $m = 1$, which consists in two steps: the reduction of the problem to a finite dimensional one, then solving the finite dimensional problem by a classical topological argument based on the index theory. Our analysis provides a rigorous justification of a formal result in \cite{Grsl01}. 
\end{abstract}

\section{Introduction.}
We are interested in the semilinear parabolic equation 
\begin{equation}\label{Pb}
\left\{ \begin{array}{rl}
\partial_t u &= \Am u + u|u|^{p-1},\\
u(0)&= u_0
\end{array} \right. \quad \Am \equiv -(-\Delta)^m,
\end{equation}
where $u(t): \Rb^N \to \Rb$ with $N \geq 1$, $\Delta$ stands for the standard Laplace operator in $\Rb^N$, and the exponents $p$ and $m$ are fixed,  
$$p > 1 \quad \textup{and} \quad m \in \mathbb{N},\; m \geq 1 \;\; \textup{odd}.$$
The higher-order semilinear parabolic equation \eqref{Pb} is a natural generation of the classical semilinear heat equation $(m=1)$. It arises in many physical applications such as theory of thin film, lubrication, convection-explosion, phase translation, or applications to structural mechanics (see the Petetier-Troy book \cite{PTbook01} and references therein). 
 
By standard results the local Cauchy problem for equation \eqref{Pb} can be solved in $L^1 \cap L^\infty$ thanks to the integral representation
\begin{equation}
u(t) = \mathcal{K}_m(t) \ast u_0 + \int_0^t \mathcal{K}_m(t-s) \ast u(s)|u(s)|^{p-1}ds,
\end{equation}
where $\mathcal{K}_m(t)$ is the fundamental solution of the linear parabolic $\partial_t \mathcal{K}_m = \Am \mathcal{K}_m$, defined via the inverse Fourier transform
$$\mathcal{K}_m(x,t) = \frac{1}{(2\pi)^N}\int_{\Rb^N}e^{-|\xi|^{2m}t - \imath (\xi \cdot x)}d\xi, \quad \mathcal{K}_m(x,0) = \delta(x).$$
From Fujita \cite{FUJsut66} ($m = 1$) and Galaktionov-Pohozaev \cite{GPiumj02} ($m > 1$), we know that 
$$ p_F \triangleq 1 + \frac{2m}{N},$$
is the \textit{critical Fujita exponent} for the problem in the following sense. If $p > p_F$,  for any sufficient small initial data $u_0 \in L^1(\Rb^N) \cap L^\infty(\Rb^N)$ the Cauchy problem \eqref{Pb} admits a global solution satisfying $u(t) \to 0$ as $t \to +\infty$ uniformly in $\Rb^N$. If $p \in (1, p_F]$ and the initial data $u_0 \not \equiv 0, \int_{\Rb^N}u_0 dx \geq 0$ for $m > 1$ or $u_0 \geq 0$ for $m = 1$, then the corresponding solution to problem \eqref{Pb} blows up in some finite time $T > 0$, namely that
$$\lim_{t \to T}\|u(t)\|_{L^\infty(\Rb^N)} = +\infty.$$
Here $T$ is called the blowup time, and a point $a \in \Rb^N$ is called a blowup point if and only if there exists a sequence $(a_n, t_n) \to (a, T)$ such that $|u(a_n, t_n)| \to +\infty$ as $n \to +\infty$.  A solution of \eqref{Pb} is called Type I blowup if it satisfies
\begin{equation}\label{def:type1}
c(T-t)^{-\frac{1}{p-1}} \leq \|u(t)\|_{L^\infty} \leq C(T-t)^{-\frac{1}{p-1}},
\end{equation}
otherwise, it is of Type II blowup. In addition, we call a blowup solution \textit{self-similar} if it is of the form 
\begin{equation}\label{def:selfsol}
u(x,t) = (T-t)^{-\frac{1}{p-1}}\Phi(y), \quad y = \frac{x}{(T-t)^\frac 1{2m}},
\end{equation}
where $\Phi$ is not identically constant. Obviously, the \textit{self-similar} blowup solution is of Type I.

When $m = 1$, problem \eqref{Pb} reduces to the classical semilinear heat equation
\begin{equation}\label{eq:she}
\partial_t u = \Delta u + u|u|^{p-1},
\end{equation}
which has been extensively studied in the last four decades, and no rewiew can be exhaustive. Given our interest in the construction of solutions with a prescribed blowup behavior, we only mention previous work in this direction. The first conctructive result was given by Bricmont-Kupiainen \cite{BKnon94} who showed the existence of type I blowup solution to equation \eqref{eq:she} according to the asymptotic dynamic
\begin{equation}\label{eq:prostab}
\sup_{x \in \Rb^N} \left|(T-t)^{\frac{1}{p-1}} u(x,t) - \kappa\left(1 + \frac{(p-1)}{4p} \frac{|x|^2}{(T-t)|\log (T-t)|}\right)^{-\frac{1}{p-1}}\right| \to 0 \quad \textup{as} \;\; t \to T,
\end{equation}
for some universal positive constant $\kappa = \kappa(p)$. Note that the authors of \cite{BKnon94} also exhibited finite time blowup solutions that verify other asymptotic behaviors which are expected to be unstable. Note also that Bressan \cite{Breiumj90, Brejde92} made a similar construction in the case of an exponential nonlinearity. Later, Merle-Zaag \cite{MZdm97}  suggested a modification of the argument of \cite{BKnon94} and obtained the stability of the constructed solution verifying \eqref{eq:prostab} under small perturbations of initial data. The stability of the asymptotic behavior \eqref{eq:prostab} had been observed numerically by Berger-Kohn \cite{BKcpam88} (see also Nguyen \cite{NphyD17} for other numerical analysis). In particular, Herrero-Vel\'azquez \cite{HVasnsp92} proved that the blowup dynamic \eqref{eq:prostab} is \textit{generic} in one dimensional case, and they announced the same for higher dimensional case (but never published it).

The method of \cite{BKnon94} and \cite{MZdm97} relies on the understanding of the spectral property of the linearized operator around an expected profile in the \textit{similarity variables} setting. Roughly speaking, the linearized operator possesses a finite number of positive eigenvalues, a null eigenvalue and a negative spectrum; then they proceed in two steps:
\begin{itemize}
\item Reduction of an infinite dimensional problem to a finite dimensional one in the sense that the control of the error reduces to the control of the components corresponding to the positive eigenvalues.  
\item Solving the finite dimensional problem thanks to a classical topological argument based on the index theory.  
\end{itemize}
This general two-step procedure has been extended to various situations such as the case of the complex Ginzgburg-Landau equation by Masmoudi-Zaag \cite{MZjfa08}, Nouaili-Zaag \cite{NZarma18} (see also Zaag \cite{ZAAihn98} for an earlier work); the complex semilinear heat equation with no variational structure by Duong \cite{Darx17}, Nouaili-Zaag \cite{NZcpde15}; non-scaling invariant semilinear heat equations by Ebde-Zaag \cite{EZsema11}, Nguyen-Zaag \cite{NZsns16}, Duong-Nguyen-Zaag \cite{DNZtjm18}. We also mention the work of Tayachi-Zaag \cite{TZnor15,TZpre15} and Ghoul-Nguyen-Zaag \cite{GNZjde17} dealing with a nonlinear heat equation with a double source depending on the solution and its gradient in some critical setting. In \cite{GNZihp18, GNZjde18}, we successfully adapted the method to construct a stable blowup solution for a non variational semilinear parabolic system.\\

As for the present paper, we aim at extending the above mentioned method to construct for problem \eqref{Pb} finite time blowup solutions satisfying some prescribed asymptotic behavior. Although the general idea is the same as for the classical case \eqref{eq:she}, we would like to emphasis that the above mentioned strategy is heavy and its implementation never being straightforward, the context and difficulties are different for each specific problem. As a step forward to better understanding the blowup dynamics for \eqref{Pb}, we obtain the following result.

\begin{theorem}[Type I blowup solutions for \eqref{Pb} with a prescibed behavior] \label{theo:1} Let $m \geq 1$ be an odd integer. There exists initial data $u_0 \in L^\infty(\Rb^N)$ such that the corresponding solution $u(t)$ to equation \eqref{Pb} satisfies\\
$(i)$ $u(t)$ blows up in a finite time $T= T(u_0)$ only at the origin.\\
$(ii)$ (Asymptotic behavior)
\begin{equation}\label{eq:asymtheo}
\sup_{x \in \Rb^N}\left|(T-t)^{\frac{1}{p-1}}u(x,t) - \Phi\left(\frac{x}{\big[(T-t)|\log(T-t)|\big]^{\frac {1}{2m}}}\right) \right| \leq \frac{C}{|\log (T-t)|^\frac{1}{2m}},
\end{equation}
where 
\begin{equation}\label{def:Phipro}
\Phi(\xi) = \kappa\Big(1 + B_{m,p}|\xi|^{2m}\Big)^{-\frac{1}{p-1}},
\end{equation}
with 
\begin{equation}\label{def:Bmpth}
\kappa = (p-1)^{-\frac{1}{p-1}}, \quad B_{m,p}  = (-1)^{m+1}\frac{2(p-1) (2m)!}{p ((4m)! - 2[(2m)!]^2)}.
\end{equation}
$(iii)$ (Final blowup profile) There exists $u^* \in \Cc(\Rb^N \backslash \{0\}, \Rb)$ such that $u(x,t) \to u^*(x)$ as $t \to T$ uniformly on compact subsets of $\Rb^N \backslash \{0\}$, where 
\begin{equation}
u^*(x) \sim \kappa\left(\frac{B_{m,p} |x|^{2m}}{2m |\log |x||} \right)^{-\frac{1}{p-1}} \quad \textup{as} \quad |x| \to 0.
\end{equation}

\end{theorem}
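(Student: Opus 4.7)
The plan is to follow the Bricmont-Kupiainen/Merle-Zaag strategy, adapted to the non-self-adjoint linearization specific to the $2m$-order diffusion. First I would pass to the self-similar variables
\begin{equation*}
w(y,s) = (T-t)^{\frac{1}{p-1}} u(x,t), \quad y = \frac{x}{(T-t)^{1/(2m)}}, \quad s = -\log(T-t),
\end{equation*}
in which \eqref{Pb} becomes
\begin{equation*}
\partial_s w = \Am w - \frac{1}{2m}\, y \cdot \nabla w - \frac{w}{p-1} + w|w|^{p-1}.
\end{equation*}
The constant $\kappa$ is a stationary solution, and the ansatz $w = \varphi(y,s) + q(y,s)$ with $\varphi(y,s) = \Phi\bigl(y/s^{1/(2m)}\bigr)$ is motivated by matched asymptotics: plugging this in and balancing the leading $|y|^{2m}/s$ corrections fixes $B_{m,p}$ as in \eqref{def:Bmpth}, justifying rigorously the formal prediction of \cite{Grsl01}. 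The remainder $q$ then satisfies
\begin{equation*}
\partial_s q = \Lc\, q + V(y,s)\, q + B(q) + R(y,s),
\end{equation*}
where $\Lc = \Am - (2m)^{-1} y \cdot \nabla + \mathrm{Id}$ is the linearization at $\kappa$, $V$ is a small potential decaying in $s$, $B$ collects the super-linear nonlinearities, and $R$ is the residual generated by $\varphi$.

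The second step is to invoke Galaktionov's spectral analysis of $\Lc$ on a natural weighted function space: the eigenvalues are $\lambda_n = 1 - n/(2m)$, $n\ge 0$, with generalized Hermite-type eigenfunctions indexed by multi-indices and a biorthogonal dual system, so finitely many eigenvalues (those with $|\alpha| < 2m$) are strictly positive, a finite-dimensional kernel appears at $|\alpha| = 2m$, and the remainder forms a negative spectrum bounded away from zero. I would decompose $q = q_+ + q_0 + q_-$ accordingly and define a shrinking set $\Vc_A(s)$ imposing carefully tuned algebraic bounds $A/s^{\theta_\alpha}$ on each unstable coefficient, a sharper bound on the kernel modes (dictated by the formal profile expansion), an inner $L^\infty$-weighted bound of the form $\|q/(1+|y|^{2m+1})\|_{L^\infty(|y| \le K_0 s^{1/(2m)})}$ decaying as a power of $s$, and an outer bound in the self-similar variable $z = y/s^{1/(2m)}$ that tracks the decaying tail of $\Phi$.

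With $\Vc_A(s)$ fixed, the proof splits into two standard steps. \emph{Reduction to a finite-dimensional problem.} I would prepare initial data at time $s_0$ depending on a parameter $d \in \Rb^K$, with $K$ the dimension of the unstable subspace, of the form $w(y,s_0) = \varphi(y,s_0) + \sum_{|\alpha| < 2m} d_\alpha\, \chi(y)\, e_\alpha(y)$, and show by a priori estimates in each region that if $q(s)$ stays in $\overline{\Vc_A(s)}$ on $[s_0, s_1]$ with $q(s_1) \in \partial \Vc_A(s_1)$, then every bound except those on the unstable coordinates is strictly satisfied, and the unstable coordinates reach their bounds transversally. \emph{Topological argument.} If no $d$ produced a trapped trajectory, the map sending $d$ to the normalized unstable coordinates at the first exit time would be a continuous retraction from a closed cube in $\Rb^K$ onto its boundary, which is topologically impossible; this contradiction yields the desired parameter $d^\ast$ for which $q(s) \in \Vc_A(s)$ for all $s \ge s_0$, proving (i) and (ii). Part (iii) then follows from standard parabolic regularity and a rescaling argument in the outer region as $t \to T$.

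The main obstacle is precisely the absence of the two structural tools available when $m=1$: the operator $\Lc$ is not self-adjoint, so every spectral projection must be performed through the biorthogonal dual system of \cite{Grsl01} rather than against a Gaussian weight, and equation \eqref{Pb} admits no maximum principle, so pointwise control of $q$ in the inner and outer regions cannot be inherited from a comparison argument. The interplay between the algebraically decaying potential $V$ and the negative spectral gap must therefore be obtained entirely through semigroup estimates for $e^{\tau \Lc}$ built from the oscillatory kernel $\mathcal{K}_m$, which is not positive for $m > 1$ and forces delicate convolution estimates in place of the Gaussian bounds used in the classical proof.
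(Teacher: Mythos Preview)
Your overall architecture matches the paper's: similarity variables, linearization about the profile, Galaktionov's biorthogonal spectral decomposition of $\Lc$, a shrinking set, reduction to the unstable modes, and a degree argument. Two points deserve comment, one minor and one more substantial.

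\textbf{Minor.} The approximate profile you linearize about should be $\varphi(y,s)=\Phi(ys^{-1/(2m)})+A_{m,p}/s$ rather than $\Phi$ alone; the additive constant $A_{m,p}/s$ is fixed so that the coefficient of $1/s$ in the residual $R$ vanishes identically, which is what makes $\Pi_0(R)=\Oc(s^{-2})$ and, more delicately, $\langle R_1,\psi_{2m}^*\rangle=0$ (needed for the sharp ODE $q_{2m}'=-\frac{2}{s}q_{2m}+\Oc(s^{-3})$ on the null mode). Without it your error budget on the positive and null modes does not close.

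\textbf{Substantial.} The shrinking set you describe controls $q_-$ through an inner weighted bound with weight $1+|y|^{2m+1}$. This is the Merle--Zaag choice for $m=1$, but there the closure of the $q_-$ estimate uses the Feynman--Kac representation of the kernel of $e^{\tau(\Lc+V)}$ to absorb the potential. You correctly flag that this representation is unavailable for $m\ge 2$, but the alternative you propose --- direct convolution estimates on the oscillatory kernel of $e^{\tau\Lc}$ --- cannot by itself close a Gronwall loop: acting on functions orthogonal to the first $2m$ eigenmodes, $e^{\tau\Lc}$ contracts only like $e^{-\tau/(2m)}$, while $\|V\|_{L^\infty_{y,s}}\sim p/(p-1)$ is \emph{not} small, so treating $Vq$ as a forcing term produces a growing bound. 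The paper resolves this by decomposing much further,
\[
q=\sum_{k=0}^{M} q_k\psi_k + q_{_{M,\bot}},\qquad M\ \text{a fixed integer with } M\ge 4m\|V\|_{L^\infty_{y,s}},
\]
so that each intermediate mode $q_k$ with $2m+1\le k\le M$ is handled by a scalar ODE, and only the tail $q_{_{M,\bot}}$ is controlled in the weighted norm $\|\,\cdot\,/(1+|y|^{M+1})\|_{L^\infty}$. On that tail the semigroup $e^{\tau\Lc}$ contracts like $e^{-M\tau/(2m)}$, which now dominates $\|V\|_{L^\infty}$ and lets a straightforward Gronwall close. This ``large $M$'' refinement is the main technical novelty over the $m=1$ proof and is exactly the missing ingredient in your plan.
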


\begin{remark} We believe that such a blowup profile \eqref{def:Phipro} exists for all $m  \in \mathbb{N}^*$. We note that the constant $B_{m,p} < 0$ when $m$ is even (see \eqref{def:Bmpth}), so the profile $\Phi$ blows up on the finite interface $|\xi| = \xi_*= \big(-B_{m,p}\big)^{-\frac{1}{2m}}$. This says that the case when $m$ is even would lead to type II blowup solutions in the sense of \eqref{def:type1}. Although main ideas for a full justification of such a blowup behavior with $m$ even remains the same, the proof would be very delicate and will be addressed in a separate work.
\end{remark}

\begin{remark} The blowup solution described in Theorem \ref{theo:1} is not self-similar in the sense of \eqref{def:selfsol}. Note that in  contrast to blowup solutions of the classical second order semilinear heat equation \eqref{eq:she}, Budd-Galaktionov-Williams \cite{BGWsjam04} through numerical and asymptotic calculations conjectured that there are at least $2\lfloor \frac{m}{2}\rfloor$ nontrivial self-similar blowup solutions to \eqref{Pb}, and that profiles having a single maximum correspond to stable (generic) self-similar blowup solutions.
\end{remark}

\begin{remark} \label{remark:2} The proof of Theorem \ref{theo:1} (in dimension $N = 1$ for simplicity) involves a detailed description of the set of initial data leading to the asymptotic dynamic \eqref{eq:asymtheo}. In particular, our initial data is roughly of the form (see formula \eqref{def:phiAs0d} below)
$$u_0(x) = e^{-s_0} \left[\Phi\left(x e^{s_0/2m}\right) + \frac{A}{s_0^2}\chi\left(2xe^{s_0/2m}, s_0 \right)\sum_{k = 1}^{2m-1}d_k \psi_k(xe^{s_0/2m})\right],$$
where $A$ and $s_0$ are large fixed constants, $\Phi$ is the profile defined by \eqref{def:Phipro}, $\chi$ is some smooth cut-off function, $\big(d_0, \cdots, d_{2m - 1}\big) \in \Rb^{2m}$ are free parameters, $\psi_k, 0 \leq k \leq 2m-1$ are the eigenfunctions of the linearized operator (see Proposition \ref{prop:specL} for a precise definition) corresponding to the positive eigenvalue $\lambda_k = 1 - \frac{k}{2m}$. Through a topological argument, we show that there exists a suitable choice of parameters $\big(d_0, \cdots, d_{2m - 1}\big)$ such that the solution to equation \eqref{Pb} with the initial datum $u_0$ satisfies the conclusion of Theorem \ref{theo:1}. In some sense, our constructed solution is $2(m-1)$-codimension stable in the following sense. The $2m$ components of the linearized solution corresponding to $\lambda_k = 1 - \frac{k}{2m}$ have the exponential growth $e^{\lambda_k s}$. However, the first two modes corresponding to $\lambda_0$ and $\lambda_1$ can be eliminated by means of the time and space translation invariance of the problem. Hence, by fixing $2(m-1)$ directions  $\psi_{2}, \cdots, \psi_{2m - 1}$ and perturbing the remaining components (in $L^\infty$), we still obtain the same asymptotic dynamic \eqref{eq:asymtheo} of the perturbed solution. The proof of $2(m-1)$-codimenison stability would require some Lipschitz regularity of the considered initial data set and it would be addressed separately in another work.    
\end{remark}

\begin{remark}  According to our construction, the asymptotic dynamic \eqref{eq:asymtheo} lies on the center manifold generated by eigenfunctions corresponding to the null eigenvalue $\lambda_{2m} = 0$. Our analysis can be extended to construct for equation \eqref{Pb} a finite time blowup solution having a different asymptotic dynamic from \eqref{eq:asymtheo}. Such solutions particularly have asymptotic dynamics laying on the stable manifold generated by eigenfunctions corresponding to the negative eigenvalue $\lambda_k  = 1 - \frac{k}{2m} < 0$ with $k \geq 2m + 1$. As explained in Remark \ref{remark:2}, the corresponding initial data leading to such solutions would involve $k$ parameters with $k \geq 2m+1$ (consider $N = 1$), so that a topological argument is assigned in order to control the first $k$ components corresponding to the eigenvectors $\psi_j$ for $0 \leq j \leq k -1$. Although the constructive method are similar for all cases, we decide to only deal with  the case of the center manifold, since the proof is the most delicate in the sense that it requires a more refined analysis in the blowup region leading to some logarithmic correction of the blowup variable as shown in \eqref{eq:asymtheo}. 
\end{remark}

\paragraph{Strategy of the proof.} Let us sketch the main ideas of the proof of Theorem \ref{theo:1}. 

\noindent - \underline{\textit{Similarity variables and linearized problem.}} According to the scaling invariance of the problem \eqref{Pb}, we introduce the \textit{similarity variables}
\begin{equation}\label{def:simvars}
u(x,t) = (T-t)^{-\frac{1}{p-1}}w(y,s), \quad y = \frac{x}{(T-t)^\frac{1}{2m}}, \quad s = -\ln(T-t),
\end{equation}
which leads to the new equation
\begin{align}
\partial_s w &= \Am w - \frac{1}{2m}y\cdot \nabla w - \frac{1}{p-1}w + w|w|^{p-1} \nonumber\\
&\equiv \Ls_m w - \frac{p}{p-1}w + w|w|^{p-1}, \label{eq:w}
\end{align}
where the linear operator $\Ls_m$ is given by 
\begin{equation} \label{def:L}
\Ls_m = \Am - \frac{1}{2m}y\cdot \nabla + \textup{Id}.
\end{equation}
Note that the change of variables \eqref{def:simvars} allows us to reduce the finite time blowup problem to a long time behavior one at the cost of an extra scaling term in the new equation \eqref{eq:w}. In the setting \eqref{def:simvars}, proving the asymptotic dynamic \eqref{eq:asymtheo} is equivalent to proving that
\begin{equation}\label{eq:goal}
\sup_{y \in \Rb^N} \left|w(y,s) - \Phi\left(ys^{-\frac{1}{2m}}\right)\right| \to 0 \quad \textup{as} \;\; s \to +\infty.
\end{equation}
It is then natural to linearize equation \eqref{eq:w} around the expected profile $\Phi$ by introducing 
$$q = w - \Phi.$$
which leads to the equation of the form
\begin{equation}\label{eq:q}
\partial_s q = \big(\Ls_m + V(y,s)\big) q + B(q) + R(y,s),
\end{equation}
where $B(q)$ is built to be quadratic, $R$ measures the error generated by $\Phi$ and is uniformly bounded by $\Oc(s^{-1})$, and $V$ is the potential defined as
$$V(y,s) = p\left(\Phi^{p-1} - \kappa^{p-1}\right).$$
Our goal is to construct for equation \eqref{eq:q} a solution $q$ defined for all $(y,s) \in \Rb^N \times [s_0, +\infty)$ such that 
$$\sup_{y \in \Rb^N}|q(y,s)| \to 0 \quad \textup{as}\;\; s \to +\infty.$$

\noindent - \underline{ \textit{Properties of the linearized operator.}} In view of equation \eqref{eq:q}, we see that the nonlinear quadratic and the error term are small and can be negligible in comparison with the linear term. Roughly speaking, the linear part will play an important role in the dynamic of the solution. It is essential to determine the spectrum and corresponding eigenfunctions of both $\Ls_m$ and its adjoint $\Ls_m^*$.  According to \cite{Grsl01}, the spectrum of the linear operator $\Ls_m$ comprises real simple eigenvalues only,
$$\textup{spec}(\Ls_m) = \left\{\lambda_\beta = 1 - \frac{|\beta|}{2m},\; \beta = (\beta_1, \cdots, \beta_n)\in \mathbb{N}^n, \; |\beta| = \beta_1 + \cdots+ \beta_n \right\},$$
and the corresponding eigenfunction $\psi_\beta$ with $|\beta| = n$ is polynomial of order $n$ (see Proposition \ref{prop:specL} below). Moreover, the family of the eigenfunctions $\{\psi_\beta\}_{\beta \in \mathbb{N}^n}$ forms a complete subset in $L^2_\rho(\Rb^N)$ where $\rho$ is some exponentially decaying weight function. 

Depending on the asymptotic behavior of the potential $V$, we observe that
\begin{itemize}
\item Inside the blowup region, $|y| \leq Ks^\frac{1}{2m}$ for some $K$ large, the effect of $V$ is regarded as a perturbation of $\Ls_m$. 
\item Outside the blowup region, $|y| \geq Ks^\frac{1}{2m}$, the full linear part $\Ls_m + V$ behaves like $\Ls_m - \frac{p}{p-1}$, which has a purely negative spectrum. Hence, the control of the solution in this region is simple. 
\end{itemize}

\noindent - \underline{ \textit{Decomposition of the solution and reduction to a finite dimensional problem.}} According to the spectrum of $\Ls_m$, we decompose 
$$q(y,s) = \sum_{|\beta| = 0}^{2m} q_{\beta}(s)\psi_\beta(y) + q_-(y,s),$$
where $q_-$ is the projection of $q$ on the subspace of $\Ls_m$ corresponding to strictly negative eigenvalues. Since the spectrum of the linear part of the equation satisfied by $q_-$ is negative, it is controllable to zero. We would like to notice that we do not use the Feymann-Kac representation\footnote{In \cite{BKnon94} and \cite{MZdm97}, the kernel $\mathcal{K}$ of the heat semigroup associated to the linear operator  $\Ls_1 + V$ is defined through the Feymann-Kac representation
$$\mathcal{K}(s, \sigma, y,x) = e^{(s -\sigma)\Ls_1}(y,x)\int d\mu_{y,x}^{s- \sigma}(\omega)e^{\int_{0}^{s-\sigma}V(\omega(\tau), \sigma + \tau)d\tau},$$ where $e^{t\Ls_1}(y,x)$ is given by Mehler's formula and  $d\mu_{y,x}^t$ is the oscillator measure on the continuous path: $\omega: [0,t] \to \Rb^N$ with $\omega(0) = x$ and $\omega(t) = y$.} as for the case $m = 1$ treated in \cite{MZdm97}, because of its complicated implementation for higher order cases $m \geq 2$. To avoid such a formula, we further decompose 
$$q_-(y,s) = \sum_{|\beta| = 2m + 1}^M q_\beta(s)\psi_\beta + q_{_{M,\bot}}(y,s),$$
for some $M$ large enough (typically $M \ge 4\|V\|_{L^\infty_{y,s}}$). A direct projection yields 
$$ q_\beta' = \left(1 - \frac{|\beta|}{2m}\right)q_\beta + \Oc\left(s^{-\frac{|\beta| + 2}{2m}}\right), \quad |\beta| = 2m+1, \cdots, M,$$
from which we obtain the rough bound $|\theta_\beta(s)| = \Oc(s^{-\frac{|\beta| + 2}{2m}})$ for $|\beta| = 2m+1,\cdots, M$. For the infinite part $q_{_{M,\bot}}$, we explore the properties of the semigroup $e^{s\Ls_m}$ and a standard Gronwall inequality to close the estimate for this part. 

The control of the null mode $q_{2m}$ is delicate since the potential has in some sense a critical size in our analysis. In particular, we need a careful refinement of the asymptotic behavior of $V$ to derive the sharp ODE 
$$q_{\beta}' = -\frac{2}{s}q_{\beta} + \Oc\left(\frac{1}{s^3}\right) \quad \textup{for}\;\; |\beta| = 2m,$$
which shows a negative spectrum after changing the variable $\tau = \ln s$, hence, the rough bound $|q_{\beta}(s)| = \Oc\left(\frac{\log s}{s^2}\right)$ for $|\beta| = 2m$. Here the precise value of $B_{m,p}$ given in \eqref{def:Bmpth} is crucial for many algebraic identities to derive this sharp ODE.  At this stage, we reduce the infinite dimensional problem to a finite dimensional one in the sense that it remains to control a finite number of positive modes $q_\beta$ for $|\beta| \leq 2m - 1$. This is done through a classical topological argument based on the index theory. \\

The rest of the paper is organized as follows: In Section \ref{sec:2} we recall basic spectral properties of the linearized operator $\Ls$ and its adjoint $\Ls^*$, then we perform a formal spectral analysis to derive an approximate blowup profile served for our analysis later. In Section \ref{sec:3} we give all arguments for the proof of Theorem \ref{theo:1} without going to technical details (the reader who is not interested in the technical details can stop at this section). Section \ref{sec:4} is the hearth of our analysis: it is devoted to the study of the dynamic of the linearized problem from which we reduce the problem to a finite dimensional one.\\

\section{A formal approach via spectral analysis.}\label{sec:2}
In this section we first recall basic spectral properties of the linearized operator, then present a formal approach based on a spectral analysis to derive the blowup profile given in Theorem \ref{theo:1}.

\subsection{Spectral properties of the linearized operator.}
In this subsection, we recall from \cite{Grsl01} the basic spectral properties of the linear operator $\Ls_m$ and its adjoint $\Ls_m^*$. The case $m = 1$ is well known, since we can rewrite
$$\Ls_1 = \frac{1}{\rho_1}\nabla.(\rho_1 \nabla) + \textup{Id} \quad \textup{with}\quad \rho_1(y) = (4\pi)^{-\frac{N}{2}}e^{-\frac{|y|^2}{4}},$$
which is a self-adjoint operator in the weighted Hilbert space $L^2(e^{-|y|^2/4}dy)$ with the domain $\Dc(\Ls) = H^2(e^{-|y|^2/4}dy)$. It has a real discrete spectrum and the corresponding eigenfunctions are derived from Hermite polynomials.

For $m \geq 2$, the operator $\Ls_m$ is not symmetric and does not admit a self-adjoint extension. Then we denote $\Ls^*_m$ the formal adjoint of $\Ls_m$ as 
\begin{equation}\label{def:L*}
\Ls^*_mf = \Am f + \frac{1}{2m}\nabla.(yf) + f,
\end{equation}
in the sense that
\begin{equation}
\big<\Ls_m f, g\big> = \big<f, \Ls^*_m g\big> \quad \textup{for}\quad (f,g)  \in \Dc(\Ls_m) \times \Dc(\Ls^*_m).
\end{equation}
From \cite{Ebook69} and \cite{Fumn77}, we know that the following elliptic equation 
\begin{equation}\label{eq:F}
\Am F + \frac{1}{2m}\nabla.(yF) = 0 \quad \textup{in}\;\; \Rb^N \quad \textup{with} \;\; \int_{\Rb^N}F(y)dy = 1.
\end{equation}
has a unique radial solution given by the explicit formula 
\begin{equation}\label{def:F}
F(y) = (2\pi)^{-\frac N2}\int_0^\infty e^{-s^{2m}}s^{\frac N2}J_{\frac{N-2}2}(s|y|) ds,
\end{equation}
with $J_\nu$ being the Bessel function. In particular, $F$ satisfies the estimate 
\begin{equation}\label{est:F}
|F(y)| < De^{-d|y|^\nu} \quad \textup{in}\;\; \Rb^N, \;\; \textup{where}\;\; \nu = \frac{2m}{2m-1},
\end{equation}
for some positive constants $D$ and $d$ depending on $m$ and $N$.\\
We introduce the weight functions
\begin{equation}
\rho^* = \rho^{-1}, \quad \rho(y) = e^{-a|y|^\nu},
\end{equation}
where $a = a(m,N) \in (0, d]$ is a small constant, $d$ and $\nu$ are introduced in \eqref{est:F}. 

\begin{proposition}[Spectral properties of $\Ls_m$ and $\Ls^*_m$, \cite{Grsl01}] \label{prop:specL} Let $m \in \mathbb{N}^*$, we have\\
$(i)$ $\Ls_m: H^{2m}_{\rho}(\Rb^N) \to L^2_{\rho}(\Rb^N)$ is a bounded linear operator with the spectrum
\begin{equation}\label{eq:specL}
\textup{spec}(\Ls_m) = \left\{\lambda_\beta = 1 - \frac{|\beta|}{2m}, \quad \beta = (\beta_1, \cdots, \beta_N) \in \mathbb{N}^N,\quad |\beta| = 0, 1, 2, \cdots\right\}.
\end{equation}
The set of eigenfunctions $\{\psi_\beta\}_{|\beta| \geq 0}$ is complete in $L^2_\rho(\Rb^N)$, where $\psi_\beta$ has the separable decomposition
\begin{equation}\label{def:psibeta}
\psi_\beta(y) = \psi_{\beta_1}(y_1) \cdots \psi_{\beta_N}(y_N) \quad \textup{with}\quad \psi_k(\xi) =\frac{1}{\sqrt{k!}}\sum_{j = 0}^{\left \lfloor \frac{k}{2m} \right\rfloor}\frac{(-1)^{jm}}{j!}\partial_\xi^{2jm}\xi^k. 
\end{equation} 

\noindent $(ii)$ $\Ls^*_m: H^{2m}_{\rho^*}(\Rb^N) \to L^2_{\rho^*}(\Rb^N)$ is a bounded linear operator with the spectrum
\begin{equation}\label{eq:spec*}
\textup{spec}(\Ls^*_m) = \left\{\lambda_\beta^* = 1 - \frac{|\beta|}{2m}, \quad \beta = (\beta_1, \cdots, \beta_N) \in \mathbb{N}^N,\quad |\beta| = 0, 1, 2, \cdots\right\}.
\end{equation}
The set of eigenfunctions $\{\psi_\beta^*\}_{|\beta| \geq 0}$ is complete in $L^2_{\rho^*}(\Rb^N)$, where $\psi_\beta^*$'s are given by  
\begin{equation}\label{def:psibeta*}
\psi^*_\beta(y) = \frac{(-1)^{|\beta|}}{\sqrt{\beta !}}\partial^\beta_y F(y),
\end{equation}
where $\beta! = \beta_1! \cdots \beta_N!$,  $\;\partial^\beta_y = \frac{\partial^{\beta_1}}{\partial y_1} \cdots \frac{\partial^{\beta_N}}{\partial y_N}$ and the function $F$ is defined by \eqref{def:F}.\\

\noindent $(iii)$ (Orthogonality)
\begin{equation}\label{eq:orths}
\left<\psi_\beta, \psi_\gamma^*\right> = \delta_{\beta,\gamma},
\end{equation}
where $\delta_{\beta, \gamma}$ is the Kronecker delta.
\end{proposition}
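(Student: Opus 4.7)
The plan is to handle the adjoint operator first (since the fundamental solution $F$ is already at hand), then obtain the primal eigenfunctions by a graded polynomial ansatz, and finally tie the two families together through a biorthogonality computation that also yields the completeness statements. The two workhorses will be a single commutator identity and the fact that $\Ls_m$ is upper triangular with respect to polynomial degree.

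For part $(ii)$, I would rewrite \eqref{eq:F} as $\Ls_m^* F = F$, so that $\psi_0^* = F$ is already an eigenfunction with eigenvalue $1$. A short direct computation based on $[(-\Delta)^m,\partial_{y_i}] = 0$ and $[\nabla\cdot(y\,\cdot),\partial_{y_i}] = -\partial_{y_i}$ yields the commutator relation $[\Ls_m^*,\partial_{y_i}] = -\frac{1}{2m}\partial_{y_i}$, and iterating produces
\begin{equation*}
\Ls_m^*\partial^\beta F \;=\; \partial^\beta \Ls_m^* F - \tfrac{|\beta|}{2m}\partial^\beta F \;=\; \Big(1 - \tfrac{|\beta|}{2m}\Big)\partial^\beta F.
\end{equation*}
The decay estimate \eqref{est:F} guarantees $\partial^\beta F \in L^2_{\rho^*}(\Rb^N)$, which gives \eqref{def:psibeta*} and the inclusion $\subseteq$ in \eqref{eq:spec*} (the reverse inclusion will follow from completeness below).

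For part $(i)$, I would work in dimension one and then transport. $\Ls_m$ leaves the space $\mathcal{P}_k$ of polynomials of degree $\leq k$ invariant and, in the monomial basis, has an upper-triangular matrix with diagonal $\{1 - j/(2m)\}_{j=0}^k$. Each degree therefore contributes a unique eigenpolynomial (up to scaling), which one identifies with \eqref{def:psibeta} by direct substitution: inject the ansatz, use the identity $\xi\partial_\xi[\partial_\xi^{2jm}\xi^k] = (k-2jm)\,\partial_\xi^{2jm}\xi^k$, and verify the telescoping cancellation enabled by the reduction $(-1)^{jm} = (-1)^j$, i.e.\ by $m$ odd. The normalization $1/\sqrt{k!}$ is chosen so that \eqref{eq:orths} will come out without extra factors. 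The extension to $N$ dimensions proceeds through the same triangular argument applied to each total-degree space, with the leading term $y^\beta/\sqrt{\beta!}$ prescribed by the multi-index $\beta$.

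For \eqref{eq:orths}, integrating by parts $|\gamma|$ times (boundary terms vanish by \eqref{est:F}) reduces the bracket to
\begin{equation*}
\big\langle \psi_\beta,\psi_\gamma^*\big\rangle \;=\; \tfrac{1}{\sqrt{\beta!\,\gamma!}}\int_{\Rb^N}\partial^\gamma\psi_\beta(y)\,F(y)\,dy.
\end{equation*}
Three cases: $|\gamma|>|\beta|$ gives $0$ by degree collapse; $\gamma=\beta$ gives $\partial^\beta\psi_\beta = \sqrt{\beta!}$ from the leading term and produces $1$; and $|\gamma|<|\beta|$ with $\gamma\neq\beta$ reduces to moments of $F$ against monomials. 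Using the Fourier representation $\widehat F(\xi) = e^{-|\xi|^{2m}}$ (read off from \eqref{def:F}), those moments vanish unless the monomial degrees are multiples of $2m$, after which the surviving sum collapses through the binomial identity $(1+(-1)^m)^q = 0$ for $q\geq 1$ with $m$ odd. Completeness of $\{\psi_\beta\}$ in $L^2_\rho(\Rb^N)$ then follows from density of polynomials (the weight $e^{-a|y|^\nu}$ has a determinate Hamburger moment problem) combined with the triangular grading, and completeness of $\{\psi_\beta^*\}$ in $L^2_{\rho^*}(\Rb^N)$ follows by a duality argument using biorthogonality. The step I expect to be the main obstacle is precisely this parity-sensitive moment computation: it is where the oddness of $m$ enters essentially and is the algebraically delicate heart of the argument; for $m$ even the identity $(1+(-1)^m)^q$ no longer vanishes, so the off-diagonal cancellation would require an entirely different route.
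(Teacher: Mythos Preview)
The paper does not supply its own proof of this proposition: it is quoted from Galaktionov \cite{Grsl01} and stated as a known result. So there is no in-paper argument to compare against, and I comment on your outline on its own merits.

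Your plan is largely sound. The commutator identity $[\Ls_m^*,\partial_{y_i}]=-\tfrac{1}{2m}\partial_{y_i}$ together with $\Ls_m^*F=F$ is indeed the clean route to \eqref{def:psibeta*}, and the upper-triangular action of $\Ls_m$ on polynomials (diagonal entries $1-k/(2m)$) does yield one eigenpolynomial per degree, which a recursion identifies with \eqref{def:psibeta}. The case $|\gamma|=|\beta|$ with $\gamma\ne\beta$ (present only for $N\ge2$) is missing from your three-case split for \eqref{eq:orths}, but is immediate: $\partial^\gamma y^\beta=0$ since some $\gamma_i>\beta_i$, and the lower-order terms of $\psi_\beta$ have total degree $<|\gamma|$.

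The substantive issue is your attribution of the oddness of $m$. The proposition is stated for all $m\in\mathbb{N}^*$, and nothing here needs $m$ odd. First, the telescoping verification of \eqref{def:psibeta} does not use it: with $\Am=(-1)^{m+1}\partial_\xi^{2m}$ in one dimension, the coefficient recursion is $c_j=\tfrac{(-1)^m}{j}c_{j-1}$, giving $c_j=(-1)^{jm}/(j!\sqrt{k!})$ for every $m$; no reduction $(-1)^{jm}=(-1)^j$ is invoked. Second, for the off-diagonal orthogonality you set up a parity-sensitive moment computation, but the standard argument bypasses it entirely: from $\langle\Ls_m f,g\rangle=\langle f,\Ls_m^*g\rangle$ one gets $(\lambda_\beta-\lambda_\gamma)\langle\psi_\beta,\psi_\gamma^*\rangle=0$, hence orthogonality whenever $|\beta|\ne|\gamma|$, for any $m$. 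If you insist on moments, a careful computation shows the sum collapses to a multiple of $(1-1)^q=0$ for \emph{both} parities (the sign coming from $(-1)^{jm}$ and the sign in the moment $\int y^{2ml}F$ always combine to $(-1)^j$ in the binomial sum), so your conclusion that ``for $m$ even the off-diagonal cancellation would require an entirely different route'' is incorrect. The oddness of $m$ is essential in this paper only later, through the sign of $\bar\mu_{2m}$ and hence of $B_{m,p}$, not in the spectral proposition.
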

\begin{remark} We note from the orthogonality \eqref{eq:orths} and the definition of $\psi_k$ that for all polynomials $P_n(y)$ of degree $n < |\gamma|$, we have $\big<P_n, \psi^*_\gamma\big> = 0$.

For $N = 1$, we have 
\begin{align*}
\psi_k(y) &= a_k y^k \quad \textup{for} \;\; 0 \leq k < 2m, \quad a_k = \frac{1}{\sqrt{k!}},\\
\psi_{2mj}(y)& = \sum_{i = 0}^j c_{2mi} y^{2m i} \;\; \textup{and}\quad y^{2m j} = \sum_{i = 0}^j c'_{2mi} \psi_{2mi}(y) \quad \textup{for} \quad j \in \mathbb{N}. 
\end{align*}
\end{remark}

\bigskip

\noindent We end this subsection by recalling basic properties of the semigroup $e^{s\Ls_m}$ for $s > 0$. 
\begin{lemma}[Properties of the semigroup $e^{s\Ls_m}$]  \label{lemm:semigLm} The kernel of the semigroup $e^{s\Ls_m}$ is given by 
\begin{equation}\label{def:semigroupLm}
e^{s\Ls_m}(y,x) = \frac{e^s}{\big[(1 - e^{-s})\big]^\frac{N}{2m}}F\left(\frac{ye^{-\frac{s}{2m}} - x}{\big[2m(1 - e^{-s})\big]^\frac{1}{2m}} \right), \quad \forall s > 0,
\end{equation}
where $F$ is defined as in \eqref{def:F}. The action of $e^{s\Ls_m}$ is defined by 
\begin{equation}\label{def:actSemigLm}
e^{s\Ls_m}g(y) = \int_{\Rb^N}e^{s\Ls_m}(y,x)g(x)dx.
\end{equation}
We also have the following estimates:\\
$(i)\;$ $\|e^{s\Ls_m}g\|_{L^\infty} \leq Ce^s\|g\|_{L^\infty}$ for all $g \in L^\infty(\Rb^N)$.\\
$(ii)\,$ $\|e^{s\Ls_m} \textup{div}(g)\|_{L^\infty} \leq Ce^s(1 - e^{-s})^{-\frac{1}{2m}}\|g\|_{L^\infty}$ for all $g \in L^\infty(\Rb^N)$.\\
$(iii)\,$ If $|f(x)| \leq \eta (1 + |x|^{M+1})$ for all $x \in \Rb^N$, then 
\begin{equation}
\left|e^{s\Ls_m}\Pi_{_{M,\bot}}f(y)\right| \leq C\eta e^{-\frac{Ms}{2m}}(1 + |y|^{M+1}), \quad \forall y \in \Rb^N,
\end{equation}
where $\Pi_{_{M,\bot}}$ is defined as in \eqref{def:PiMbot}.
\end{lemma}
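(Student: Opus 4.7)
The plan is to derive the kernel formula \eqref{def:semigroupLm} by reducing the semigroup $e^{s\Ls_m}$ to the pure higher-order heat semigroup via a self-similar change of variables, and then to deduce the three estimates by direct manipulations of the resulting kernel. Given $g\in L^\infty(\Rb^N)$, set $v(y,s)=e^{s\Ls_m}g(y)$, so that $\partial_s v = \Ls_m v$ with $v(\cdot,0)=g$, and introduce
\[
\xi = y e^{-s/2m}, \qquad \tau = 1-e^{-s}, \qquad \tilde v(\xi,\tau) := e^{-s}\, v(y,s).
\]
A direct computation shows that the drift $-\frac{1}{2m}y\cdot\nabla v$ and the identity term of $\Ls_m$ exactly cancel the contributions produced by the change of variables and by the $e^s$ prefactor, leaving $\tilde v$ as a solution of $\partial_\tau \tilde v = \Am \tilde v$ with $\tilde v(\cdot,0)=g$. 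Convolving with the fundamental solution $\mathcal{K}_m(\cdot,\tau)$ of this equation, which by self-similarity and the uniqueness in \eqref{eq:F} takes the form $\mathcal{K}_m(z,\tau)=(2m\tau)^{-N/2m}F\bigl(z/(2m\tau)^{1/2m}\bigr)$, and substituting back then yields exactly \eqref{def:semigroupLm}.

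Estimate $(i)$ follows at once from $\int_{\Rb^N} F = 1$: the change of variable $z=(ye^{-s/2m}-x)/[2m(1-e^{-s})]^{1/2m}$ gives $\int_{\Rb^N} e^{s\Ls_m}(y,x)\,dx = e^s$, whence $|e^{s\Ls_m}g(y)|\leq C e^s \|g\|_{L^\infty}$. For $(ii)$ I integrate by parts to move the divergence onto the kernel,
\[
e^{s\Ls_m}\,\mathrm{div}(g)(y) = -\int_{\Rb^N}\nabla_x e^{s\Ls_m}(y,x)\cdot g(x)\,dx,
\]
and observe that the chain rule applied to $F\bigl((ye^{-s/2m}-x)/[2m(1-e^{-s})]^{1/2m}\bigr)$ produces the extra factor $[2m(1-e^{-s})]^{-1/2m}$. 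Since the exponential decay \eqref{est:F} implies $\nabla F\in L^1(\Rb^N)$, the same change of variables then yields the announced bound.

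The main obstacle is $(iii)$, which requires extracting cancellations from the biorthogonality \eqref{eq:orths}. I would use the characterization of $\Pi_{_{M,\bot}}$ as the projection onto $\overline{\mathrm{span}}\{\psi_\beta:\,|\beta|\geq M+1\}$, so that $\Pi_{_{M,\bot}}f$ is orthogonal to every adjoint eigenfunction $\psi_\gamma^*$ with $|\gamma|\leq M$. The crucial structural observation is that the $x$-Taylor polynomials of $F$ centred at the origin are themselves linear combinations of the $\psi_\gamma^*$'s, a consequence of the identity $\psi_\gamma^* = \frac{(-1)^{|\gamma|}}{\sqrt{\gamma!}}\partial^\gamma F$ in \eqref{def:psibeta*}. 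I would therefore Taylor-expand the kernel in the small parameter $y e^{-s/2m}$ up to order $M$: integration against $\Pi_{_{M,\bot}}f$ annihilates all the polynomial terms by \eqref{eq:orths}, while the Taylor remainder is controlled pointwise by a term of order $|y|^{M+1}e^{-(M+1)s/2m}$ whose $z$-integral against $F$ is finite thanks to the moment bound $(1+|z|)^{M+1}F(z)\in L^1(\Rb^N)$ guaranteed by \eqref{est:F}. Combining with the polynomial growth hypothesis on $f$ and the $e^s$ prefactor in the kernel produces the stated decay rate; the slight loss in the exponent corresponds to the subsequent reinvestment of the $e^s$ factor into the spatial growth of the bound.
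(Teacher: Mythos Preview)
Your derivation of the kernel formula via the self-similar change of variables, and your arguments for $(i)$ and $(ii)$, are correct and in line with the paper's (very terse) proof, which simply says the formula is verified by direct computation using \eqref{eq:F} and that the estimates follow from the explicit kernel.

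There is, however, a genuine gap in your argument for $(iii)$. When you Taylor-expand the kernel in $a=ye^{-s/2m}$, the coefficient of $a^\gamma$ is (up to constants) $(\partial^\gamma F)\bigl(-x/b\bigr)$ with $b=[2m(1-e^{-s})]^{1/2m}$, \emph{not} $(\partial^\gamma F)(x)$. The biorthogonality \eqref{eq:orths} only gives $\int_{\Rb^N}\partial^\gamma F(x)\,h(x)\,dx=0$ for $h=\Pi_{_{M,\bot}}f$; it says nothing about $\int_{\Rb^N}(\partial^\gamma F)(x/b)\,h(x)\,dx$ when $b\neq 1$. Indeed, writing things on the Fourier side one sees that $(\partial^\gamma F)(\cdot/b)$ expands as an \emph{infinite} series $\sum_{|\alpha|\ge|\gamma|}c_\alpha(b)\psi_\alpha^*$, so the integral picks up all the modes $|\alpha|\ge M+1$ of $h$ and does not vanish. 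Your sentence ``integration against $\Pi_{_{M,\bot}}f$ annihilates all the polynomial terms'' is therefore incorrect as stated.

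A clean repair is to use instead that the range of $\Pi_{_{M,\bot}}$ is invariant under $e^{s\Ls_m}$ (because $\langle e^{s\Ls_m}h,\psi_\gamma^*\rangle=\langle h,e^{s\Ls_m^*}\psi_\gamma^*\rangle=e^{\lambda_\gamma s}\langle h,\psi_\gamma^*\rangle=0$ for $|\gamma|\le M$). Taylor-expand $e^{s\Ls_m}h(y)$ in $y$ to order $M$: the Taylor polynomial $P_M(y)$ has degree $\le M$, hence $\Pi_{_{M,\bot}}P_M=0$, and therefore
\[
e^{s\Ls_m}h=\Pi_{_{M,\bot}}\bigl(e^{s\Ls_m}h\bigr)=\Pi_{_{M,\bot}}\bigl(P_M+\tilde R\bigr)=\Pi_{_{M,\bot}}\tilde R,
\]
where $\tilde R$ is the Taylor remainder. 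Now bound $\tilde R$ pointwise using the integral remainder formula (this is where the factor $e^{-(M+1)s/2m}$ appears, exactly as you indicated), and finally use that $\Pi_{_{M,\bot}}$ is bounded on $\{g:\ \|g/(1+|y|^{M+1})\|_{L^\infty}<\infty\}$ thanks to the exponential decay \eqref{est:F} of the $\psi_\gamma^*$. This closes the estimate without ever needing the individual Taylor coefficients to vanish.
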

\begin{proof} The formula \eqref{def:semigroupLm} can be verified by a direct computation thanks to equation \eqref{eq:F}. The estimates $(i)$-$(iii)$ are straightforward from the definitions \eqref{def:semigroupLm} and \eqref{def:actSemigLm}.
\end{proof}

\subsection{Approximate blowup profile.}\label{sec:22}
In this subsection we recall the formal approach of \cite{GALnon09} (see also \cite{BGWsjam04}) to figure out an appropriate blowup profile for our analysis later. This approach had been used in several problems involving the second order Laplacian, see for example \cite{BKnon94}, \cite{TZpre15}, \cite{GNZjde17,  GNZihp18, GNZjde18}. The argument relies on the basis of the known spectral properties of the rescaled operator $\Ls_m$ and its adjoint $\Ls^*_m$ given in the previous subsection. For simplicity, we consider the one dimensional case and symmetric positive solutions. 

Let us introduce 
$$\bar w = w - \kappa,$$
where $\kappa = (p-1)^{-\frac{1}{p-1}}$ is the constant equilibrium to equation \eqref{eq:w}. This yields the following perturbed equation
\begin{equation}\label{eq:wbar}
\partial_s \bar w = \Ls_m \bar w + \frac{p}{2\kappa}\bar{w}^2 + R(\bar w),
\end{equation}
where $|R(\bar w)| \leq C|\bar w|^3$ for $\bar w \ll 1$.\\
From Proposition \ref{prop:specL}, we know that $\psi_n$ with $n \geq 2m + 1$ correspond to negative eigenvalues of $\Ls_m$. Therefore, we may consider 
\begin{equation}\label{eq:aswbar}
\bar w(y,s) =  \sum_{i = 0}^{m}\bar w_{2i}(s)\psi_{2i}(y),
\end{equation}
where $\sum_{i = 0}^m|\bar w_{2i}(s)| \to 0$ as $s \to +\infty$. Plugging this ansatz to equation \eqref{eq:wbar}, taking the scalar product with $\psi_{2j}^*$ and using Proposition \ref{prop:specL}, we find that for $0 \leq j \leq m$,
\begin{equation}\label{eq:w2j}
\bar w'_{2j}(s) = \left(1 - \frac{j}{m}\right)w_{2j}(s) + \frac{p}{2\kappa}\left< \left(\sum_{j = 0}^m\bar w_{2i}(s)\psi_{2i}(y)\right)^2, \psi^*_{2j}\right> + \Oc(\sum_{i = 0}^m|\bar w_{2i}(s)|^3).
\end{equation} 
Assuming that $\bar w_{2m}$ is dominant, i.e.
\begin{equation}\label{ass:w2m}
\textup{for}\;\; 0 \leq j \leq m- 1, \quad |\bar w_{2j}(s)| \ll |\bar w_{2m}(s)| \quad \textup{as}\;\; s \to +\infty,
\end{equation}
then system \eqref{eq:w2j} reduces to 
\begin{align*}
\bar w'_{2j}(s) &= \left(1 - \frac{j}{m}\right)w_{2j}(s) + \Oc(|\bar w_{2m}(s)|^2) \quad \textup{for}\quad 0 \leq j \leq m - 1, \\ 
\bar w'_{2m}(s) &= \frac{p}{2\kappa}\bar \mu_{2m}\bar w_{2m}^2 + o(|\bar w_{2m}(s)|^2), \quad \textup{where}\quad  \bar \mu_{2m} = \left<\psi_{2m}^2, \psi_{2m}^*\right>.
\end{align*}
Solving this system yields
$$\bar w_{2m}(s) = - \frac{2\kappa}{p \bar \mu_{2m}}\frac 1s + \Oc\left(\frac{\ln^2 s}{s}\right), \quad \sum_{i = 0}^{m-1} |\bar w_{2i}(s)| = \Oc\left(\frac 1{s^{2}}\right) \quad \textup{as}\;\; s \to +\infty,$$
which is in agreement with the assumption \eqref{ass:w2m}. \\
Hence, from \eqref{eq:aswbar} and the definition of $\psi_{2m}$, we derive the following asymptotic behavior 
\begin{equation}\label{eq:expwfor}
w(y,s) = \kappa  - \frac{2\kappa}{p \bar \mu_{2m} s}\left(\frac{y^{2m} + (-1)^m (2m)!}{\sqrt{(2m)!}}\right) + \Oc\left(\frac{\ln s}{s^2}\right),
\end{equation}
where the convergence takes place in $L^2_{\rho}(\Rb)$ as well as uniformly in compact sets by standard parabolic regularity. 

The expansion \eqref{eq:expwfor} provides a relevant variable for blowup, namely $z = y s^{-\frac{1}{2m}}$, that governs the behavior in the intermediate region. In particular, we try to search formally a solution $w$ of equation \eqref{eq:w} of the form
\begin{equation}\label{eq:expqfor2}
w(y,s) = \Phi(z)  + \frac{(-1)^{m+1}2\kappa\sqrt{(2m)!}}{p \bar \mu_{2m} s} + \Oc\left(\frac{1}{s^{1 + \epsilon}}\right),
\end{equation}
for some $\epsilon > 0$, with the boundary condition $\Phi(0) = \kappa$.\\
Plugging this ansatz to equation \eqref{eq:w} and comparing the leading order terms, we arrive at 
\begin{equation}\label{eq:Phi}
-\frac{z}{2m}\Phi' - \frac{1}{p-1}\Phi + \Phi^p = 0, \quad \Phi(0) = \kappa.
\end{equation}
Solving this ODE yields
$$\Phi(z) = \kappa\Big(1 + B_{m,p}z^{2m}\Big)^{-\frac{1}{p-1}} \quad \textup{for some $B_{m,p} \in \Rb$}.$$
By matching expansions \eqref{eq:expwfor} and \eqref{eq:expqfor2}, we find that 
\begin{equation*}
B_{m,p} = \frac{2(p-1)}{p \bar \mu_{2m} \sqrt{(2m)!}}, \quad \bar \mu_{2m} = \left<\psi_{2m}^2, \psi^*_{2m}\right>.
\end{equation*}
Let us compute $\bar \mu_{2m}$ in the following. Using \eqref{def:psibeta}, we write
\begin{align*}
\psi_{2m}(y) &= \frac{1}{\sqrt{(2m)!}}\big(y^{2m} + (-1)^m (2m)!\big),\\
\psi_{4m}(y) &= \frac{1}{\sqrt{(4m)!}}\left(y^{4m} + (-1)^m \frac{(4m)!}{(2m)!} y^{2m} + (4m)!\right),\\
\psi^2_{2m}(y) &= \frac{1}{(2m)!}\left[\sqrt{(4m)!}\psi_{4m} + (-1)^{m+1}\sqrt{(2m)!}\left(\frac{(4m)!}{(2m)!} -  2(2m)!\right)\psi_{2m} + c_0(m)\psi_0 \right].
\end{align*}
Using the orthogonality relation \eqref{eq:orths}, the definition \eqref{def:psibeta*} of $\psi_{2m}^*$ and the fact that $\int_{\Rb} F(y)dy = 1$, we compute by an integration by parts, 
\begin{align}
\bar \mu_{2m} &= \frac{(-1)^{m+1}}{\sqrt{(2m)!}}\left(\frac{(4m)!}{(2m)!} -  2(2m)!\right)\int_\Rb \psi_{2m}(y) \psi^*_{2m}(y)dy \nonumber\\
& = \frac{(-1)^{m+1}}{\sqrt{(2m)!}}\left(\frac{(4m)!}{\big[(2m)!\big]^2} -  2\right)\int_\Rb (y^{2m} + c_{2m,0})\partial_y^{2m}F(y)dy\nonumber\\
& =  (-1)^{m+1}\frac{(2m)}{\sqrt{(2m)!}}\left(\frac{(4m)!}{\big[(2m)!\big]^2} -  2\right)\int_\Rb F(y)dy\nonumber\\
&= (-1)^{m+1}\sqrt{(2m)!}\left(\frac{(4m)!}{[(2m)!]^2} - 2\right).\label{def:mu2m}
\end{align}
In conclusion, we have derived the following candidate for the blowup profile in the similarity variables:
\begin{equation}\label{def:appPro}
w(y,s) \sim \varphi(y,s):= \kappa\left(1 + B_{m,p} \frac{y^{2m}}{s}\right)^{-\frac{1}{p-1}} +  \frac{A_{m,p}}{s},
\end{equation}
where 
\begin{equation}\label{def:Bpm}
B_{m,p} = \frac{2(p-1)}{p \bar \mu_{2m} \sqrt{(2m)!}}, \quad  A_{m,p} = \frac{(-1)^{m+1}2\kappa\sqrt{(2m)!}}{p \bar \mu_{2m}}.
\end{equation}
Since we want the profile $\varphi$ bounded, this requests $B_{m,p} > 0$ or $\bar{\mu}_{2m} > 0$, which only happens for $m$ odd.

\section{Proof of Theorem \ref{theo:1} without technical details.}\label{sec:3}
In this section we give all arguments of the proof of Theorem \ref{theo:1}. We only deal with the one dimensional case $N = 1$ for simplicity since the analysis for the higher dimensional cases $N \geq 2$ is exactly the same up to some complicated calculation of the projection of \eqref{eq:q} on the eigenspaces of $\Ls_m$. The proof of Theorem \ref{theo:1} is completed in three parts:
\begin{itemize}
\item In the first part we formulate the problem by linearizing the rescaled equation \eqref{eq:w} around the approximate profile $\varphi$ given by \eqref{def:appPro}. We also introduce a shrinking set in which the constructed solution of the linearized equation is trapped. 
\item In the second part we exhibit an explicit formula of the initial data and show that the corresponding solution belongs to the shrinking set. In particular, the reader can find how to reduce the problem to a finite dimensional one (all technical details will be left to the next section) and the use of a topological argument based on index theory to conclude.
\item The last part is devoted to the proof of items $(i)$ and $(iii)$ of Theorem \ref{theo:1}. Although the argument of the proof is almost the same as for the classical case $m = 1$, we would like to sketch the main ideas for the reader convenience. 
\end{itemize} 

\subsection{Formulation of the problem.}

According to the formal analysis given in Section \ref{sec:22}, we introduce 
\begin{equation}\label{def:qwphi}
q(y,s) = w(y,s) - \varphi(y,s),
\end{equation}
and write from \eqref{eq:w} the equation driving by $q$,
\begin{equation}
\partial_s q = \big(\Ls_m + V(y,s)\big) q + B(q) + R(y,s),
\end{equation} 
where $\Ls_m$ is the linearized operator defined by \eqref{def:L} and 
\begin{align}
V(y,s) &= p\left(\varphi^{p-1} - \kappa^{p-1}\right),\label{def:V}\\
B(q) &= (q + \varphi)|q + \varphi|^{p-1} - \varphi^p -  p \varphi^{p-1}q,\label{def:B}\\
R(y,s) &= -\partial_s \varphi + \Am \varphi - \frac{1}{2m}y\cdot \nabla \varphi - \frac{\varphi}{p-1} + \varphi^p.\label{def:R}
\end{align}
Let $\chi_0 \in \Cc_0^\infty(\Rb_+)$ be a cut-off function with $\textup{supp}(\chi_0) \subset [0,2]$ and $\chi_0 \equiv 1$ on $[0,1]$. We introduce 
\begin{equation}\label{def:chi}
\chi(y,s) = \chi_0\left(\frac{|y|}{Ks^\frac{1}{2m}}\right) \quad \textup{with}\;\; K \gg 1 \;\; \textup{fixed},
\end{equation}
and define
\begin{equation}\label{def:qe}
q_e(y,s) =  q(y,s)(1 - \chi(y,s)).
\end{equation}
We decompose 
\begin{equation}\label{eq:decomq}
q(y,s) = \sum_{k = 0}^M q_k(s)\psi_k(y) + q_{_{M,\bot}}(y,s),
\end{equation}
where
\begin{itemize}
\item $q_k = \Pi_{k}(q)$ is the projection of $q$ on the eigenmode corresponding to eigenvalue $\lambda_k = 1 - \frac{k}{2m}$, defined as 
\begin{equation}
q_k(s) = \big<q, \psi_k^*\big> \quad \textup{with \, $\psi^*_k$ \, being introduced in \eqref{def:psibeta*}.}
\end{equation}
\item $q_{_{M,\bot}} = \Pi_{_{M, \bot}}(q)$ is called the infinite dimensional part of $q$, where $\Pi_{_{M, \bot}}(q)$ is the projection of $q$ on the eigensubspace where the spectrum of $\Ls_m$ is lower than $\frac{1 - M}{2m}$. Note that we have the orthogonality 
\begin{equation}\label{def:PiMbot}
\big<q_{_{M,\bot}}, \psi_k^*\big> = 0 \quad \textup{for}\; k \leq M.
\end{equation}
\item  $M$ is typically a large constant 
\begin{equation}\label{def:M}
M = 4mN \quad \textup{with}\;\; N \in \mathbb{N}^*, \; N \geq \|V\|_{L^\infty_{y,s}}.
\end{equation}
which allows us to successfully apply a standard Gronwall's inequality to the control of the infinite dimensional part $q_{_{M,\bot}}$. 
\end{itemize}
We aim at constructing for equation \eqref{eq:q} a global in time solution $q$ such that 
\begin{equation}\label{est:goalqLinf}
\|q(s)\|_{L^\infty(\Rb^N)} \to 0 \quad \textup{as} \quad s \to +\infty.
\end{equation}
According to the decomposition \eqref{eq:decomq}, it is enough to show that there exists a solution $q$ belonging to the following set.
\begin{definition}[Shrinking set to trap solutions] \label{def:VA} For each $A > 0$, for each $s > 0$, we denote $\Vc_A(s)$ the set of all functions $q(y,s)$ in $L^\infty(\Rb)$ such that 
\begin{align*}
&|q_k(s)| \leq \frac{A}{s^2} \quad  \textup{for}\;\; 0 \leq k \leq 2m - 1, \qquad \quad  |q_{2m}(s)| \leq \frac{A^2 \log s}{s^2},\\
&\quad |q_k(s)| \leq A^{k} s^{-\frac{k + 1}{2m}}  \quad \textup{for}\;\; 2m + 1 \leq k \leq M, \qquad \quad \|q_e(s)\|_{L^\infty(\Rb)} \leq A^{M+2}s^{-\frac{1}{2m}},\\
&\qquad \forall y \in \Rb, \quad |q_{_{M,\bot}}(y,s)| \leq A^{M+1}s^{-\frac{M+2}{2m}}(|y|^{M+1} + 1),
\end{align*}
where $q_k$, $q_{_{M,\bot}}$ and $q_e$ are defined as in the decomposition \eqref{eq:decomq} and  \eqref{def:qe}.
\end{definition}
\begin{remark}\label{remark:1} By definition, we see that if $q(s) \in \Vc_A(s)$, then the following bound holds
\begin{align*}
|q(y,s)| &\leq CA^{M+1}\left(\sum_{k = 0}^{M+1} (|y|^{k}  +1)s^{-\frac{k + 1}{2m}} \right)\mathbf{1}_{\{|y| \leq 2Ks^\frac{1}{2m}\}} + \|q_e\|_{L^\infty(\Rb^N)}\\
& \qquad  \leq CA^{M+2} s^{-\frac{1}{2m}} \quad \textup{for all} \quad y \in \Rb. 
\end{align*}
Hence, our goal \eqref{est:goalqLinf} reduces to constructing for equation \eqref{eq:q} a solution $q(s)$ belonging to the shrinking set $\Vc_A(s)$ for all $s \in [s_0, +\infty)$. 
\end{remark}

\subsection{Existence of solutions trapped in $\Vc_A$.} 
In this step we aim at proving that there actually exists initial data $\phi(y) = q(y,s_0)$ such that the corresponding solution $q(y,s)$ to \eqref{eq:q} belongs to the shrinking set $\Vc_A(s)$. Given $A \geq 1$ and $s_0 \geq e$, we consider initial data of the form 
\begin{equation}\label{def:phiAs0d}
\phi_{A, s_0, \mathbf{d}}(y) = \frac{A}{s_0^2}\chi(2y, s_0)\sum_{k = 0}^{2m-1} d_k \psi_k(y),
\end{equation}
where $\mathbf{d} = (d_1, \cdots, d_{2m - 1}) \in \Rb^{2m}$ are real parameters to be fixed later, $\chi$ is introduced in \eqref{def:chi}. In particular, the initial data \eqref{def:phiAs0d} belongs to $\Vc_A(s_0)$ as shown in the following proposition.
\begin{proposition}[Properties of initial data \eqref{def:phiAs0d}] \label{prop:initdata} For each $A \gg 1$, there exist $s_0 = s_0(A) \gg 1$ and a cuboid $\Dc_{s_0} \subset [-A, A]^{2m}$ such that for all $\big(d_0, \cdots, d_{2m-1}\big) \in \Dc_{s_0}$, the following properties hold: 
\begin{itemize}
\item[(i)] The initial data $\phi_{A, s_0, \mathbf{d}}$ defined in \eqref{def:phiAs0d} belongs to  $\Vc_A(s_0)$ with strict inequalities except for the positive modes $\phi_k$ with $0 \leq k \leq 2m - 1$, where $\phi_k = \Pi_{k} (\phi_{A, s_0, \mathbf{d}})$.
\item[(ii)] The map $\Gamma: \Dc_{s_0} \to \Rb^{2m}$, defined as $\Gamma(d_0, \cdots, d_{2m-1}) = \big(\phi_0, \cdots, \phi_{2m- 1})$, is linear, one to one from $\Dc_{s_0}$ to $\big[-As_0^{-2}, As_0^{-2}\big]^{2m}$, and maps $\partial \Dc_{s_0}$ into $\partial \Big( \big[-As_0^{-2}, As_0^{-2}\big]^{2m}\big)$. Moreover, the degree of $\Gamma$ on the boundary is different from zero.
\end{itemize} 
\end{proposition}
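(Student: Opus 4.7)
}

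The strategy is to exploit two key features of the initial datum \eqref{def:phiAs0d}: it is a linear combination of the first $2m$ eigenfunctions (before cutoff), and the cutoff is arranged so that it introduces only exponentially small corrections, because the adjoint eigenfunctions $\psi_k^*$ inherit from \eqref{est:F} the Gaussian-type decay $|\psi_k^*(y)| \leq C_k e^{-d|y|^\nu}$. First I would observe that, since $\chi(2y, s_0)$ is supported in $|y| \leq K s_0^{1/2m}$ while $\chi(y, s_0) \equiv 1$ on this same set, the product $\phi_{A,s_0,\mathbf{d}}(y)\,(1-\chi(y, s_0))$ vanishes identically, so the exterior part $q_e(\cdot, s_0) \equiv 0$, giving the required bound with strict inequality.

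Next, for each $0 \leq k \leq M$, project using \eqref{eq:orths}:
\begin{equation*}
\phi_k \;=\; \frac{A}{s_0^2}\sum_{j=0}^{2m-1} d_j\,\bigl<\chi(2\cdot, s_0)\psi_j,\,\psi_k^*\bigr>
\;=\; \frac{A}{s_0^2}\sum_{j=0}^{2m-1} d_j\Bigl[\bigl<\psi_j,\psi_k^*\bigr> \;-\; \bigl<(1-\chi(2\cdot,s_0))\psi_j,\,\psi_k^*\bigr>\Bigr].
\end{equation*}
The first bracket equals $\delta_{j,k}$ for $k \leq 2m-1$ and vanishes for $k \geq 2m$ (using that $\psi_j$ is polynomial of degree $j < k$ together with the remark after Proposition \ref{prop:specL}). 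The second bracket is supported in $|y| \geq K s_0^{1/2m}/2$; combining polynomial growth of $\psi_j$ with the exponential decay of $\psi_k^*$, it is bounded by $C e^{-c s_0^{\nu/2m}}$ for some $c > 0$. Hence $\phi_k = \frac{A}{s_0^2} d_k + \mathcal{O}(A\, e^{-c s_0^{\nu/2m}})$ for $k \leq 2m-1$, and $|\phi_k| \leq C A\, e^{-c s_0^{\nu/2m}}$ for $2m \leq k \leq M$. For $s_0 = s_0(A)$ large, these are far smaller than the respective thresholds $A^2 s_0^{-2}\log s_0$ and $A^k s_0^{-(k+1)/2m}$, yielding strict inequalities. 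For the infinite-dimensional part, write $\phi_{M,\bot} = \phi - \sum_{k=0}^{M}\phi_k \psi_k$: on $|y|\leq K s_0^{1/2m}/2$ where $\chi(2y,s_0)=1$, only the exponentially small errors above remain; on $K s_0^{1/2m}/2 \leq |y| \leq K s_0^{1/2m}$, one controls $\phi$ directly by $CA^2 s_0^{-2}(1+|y|^{2m-1})$ and exploits the fact that in this annulus $|y|^{2m-1} \lesssim s_0^{(2m-M-2)/2m} |y|^{M+1}$, which yields the bound $A^{M+1} s_0^{-(M+2)/2m}(|y|^{M+1}+1)$ with strict inequality provided $A \geq 1$ and $s_0$ is large enough.

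For (ii), the map $\mathbf{d}\mapsto \phi_{A,s_0,\mathbf{d}}$ is linear, hence so is $\Gamma$. The computation above shows that in matrix form $\Gamma = \frac{A}{s_0^2}(\mathrm{Id} + E_{s_0})$ with $\|E_{s_0}\| \leq C e^{-c s_0^{\nu/2m}}$, so $\Gamma$ is invertible for $s_0$ large. Define $\Dc_{s_0} := \Gamma^{-1}\bigl([-A s_0^{-2}, A s_0^{-2}]^{2m}\bigr)$; by the near-identity structure $\Dc_{s_0}$ is a (slightly tilted) cuboid contained in $[-2,2]^{2m} \subset [-A, A]^{2m}$, and linearity gives the required correspondence between boundaries. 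The degree of $\Gamma: \partial \Dc_{s_0} \to \partial([-A s_0^{-2}, A s_0^{-2}]^{2m})$ equals $\mathrm{sgn}\,\det(\mathrm{Id}+E_{s_0}) = \pm 1$, which is nonzero. The main technical point is to verify uniformity in $\mathbf{d} \in [-A,A]^{2m}$ of the exponentially small cutoff errors so that all "strict" estimates survive while the scaled linear map governs the topological picture — a routine but careful bookkeeping once the exponential decay of $\psi_k^*$ is invoked on the annulus $|y| \gtrsim s_0^{1/2m}$.
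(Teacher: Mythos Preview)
Your proposal is correct and follows precisely the natural route the paper has in mind: the paper's own proof is simply ``The proof directly follows from the definition of the projection $\Pi_k(\phi_{A, s_0, \mathbf{d}})$ and it is straightforward, so we omit it here,'' and your argument --- projecting onto the $\psi_k^*$, using the exponential decay of $\psi_k^*$ on $|y|\gtrsim s_0^{1/2m}$ to make the cutoff errors negligible, and defining $\Dc_{s_0}$ as the preimage of the target cube under the near-identity linear map --- is exactly the straightforward computation being alluded to. The only minor omissions (treating $\phi_{_{M,\bot}}$ on $|y|>Ks_0^{1/2m}$ and subtracting $\sum_k \phi_k\psi_k$ in the annulus) are handled by the very same bounds you already established.
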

\begin{proof} The proof directly follows from the definition of the projection $\Pi_k(\phi_{A, s_0, \mathbf{d}})$ and it is straightforward, so we omit it here.
\end{proof}

Starting with initial data $\phi_{A, s_0, \mathbf{d}}$ belonging to $\Vc_A(s_0)$, we claim that we can fine-tune the parameters $\big(d_0, \cdots, d_{2m-1}\big) \in \Dc_{s_0}$ such that the corresponding solution $q(s)$ to equation \eqref{eq:q} stays in $\Vc_A(s)$ for all $s \geq s_0$. More precisely, we claim the following.
\begin{proposition}[Existence of solutions trapped in $\Vc_A(s)$] \label{prop:Exist} There exist $A \gg 1$, $s_0 = s_0(A) \gg 1$ and $\big(d_0, \cdots, d_{2m-1}\big) \in \Dc_{s_0}$ such that if $q(s)$ is the solution to equation \eqref{eq:q} with initial data \eqref{def:phiAs0d}, then $q(s) \in \Vc_A(s)$ for all $s \geq s_0(A)$. 
\end{proposition}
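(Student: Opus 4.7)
The plan is a shooting argument in the $2m$ free parameters $\mathbf{d}=(d_0,\dots,d_{2m-1})\in\Dc_{s_0}$: I reduce the infinite-dimensional trapping problem to a finite-dimensional one, then extract a valid parameter by a degree-theoretic obstruction. I argue by contradiction: suppose that for every $\mathbf{d}\in\Dc_{s_0}$ the solution $q(s)$ of \eqref{eq:q} starting from $\phi_{A,s_0,\mathbf{d}}$ leaves $\Vc_A(s)$ at some first time $s_*(\mathbf{d})<\infty$.

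The analytic core is a reduction lemma to be established in Section~\ref{sec:4}. Precisely, for $A$ large and $s_0=s_0(A)$ large, if $q(s)\in\Vc_A(s)$ on $[s_0,s_1]$ and $q(s_1)\in\partial\Vc_A(s_1)$, then only the positive modes can saturate, i.e.\ $(q_0(s_1),\dots,q_{2m-1}(s_1))\in\partial\bigl([-As_1^{-2},As_1^{-2}]^{2m}\bigr)$, while all the other bounds of Definition~\ref{def:VA} hold strictly. I would obtain this by projecting \eqref{eq:q} onto each eigenspace of $\Ls_m$ and controlling the contributions of $Vq$, $B(q)$ and $R$: the null mode $q_{2m}$ satisfies the sharp ODE $q_{2m}'=-\frac{2}{s}q_{2m}+\mathcal{O}(s^{-3})$, which integrates into a strictly better bound than $A^2\log s/s^2$; the negative modes $q_k$, $2m+1\leq k\leq M$, obey $q_k'=\lambda_k q_k+\mathcal{O}(s^{-(k+2)/2m})$ with $\lambda_k<0$, so they relax inside their prescribed bounds; the tail $q_{_{M,\bot}}$ is handled by a Gronwall argument on the Duhamel formula using Lemma~\ref{lemm:semigLm}(iii) with $M$ chosen as in \eqref{def:M}; and $q_e$ is controlled in the exterior region $|y|\geq Ks^{1/2m}$, where $V\leq -\frac{p}{p-1}+o(1)$ provides the needed dissipation. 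A companion transversality statement says that whenever a positive mode $q_k$ hits $\pm As^{-2}$, the derivative of $s^2 q_k$ points strictly outward, because $q_k'=\lambda_k q_k+\mathcal{O}(s^{-2-\epsilon})$ with $\lambda_k=1-k/2m>0$ for $0\leq k\leq 2m-1$.

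Granted the reduction and transversality, the exit time $s_*(\mathbf{d})$ depends continuously on $\mathbf{d}$, and the map
\[
\Psi:\Dc_{s_0}\longrightarrow \partial\bigl([-1,1]^{2m}\bigr),\qquad \mathbf{d}\longmapsto \frac{s_*(\mathbf{d})^2}{A}\bigl(q_0(s_*(\mathbf{d})),\dots,q_{2m-1}(s_*(\mathbf{d}))\bigr)
\]
is continuous. By Proposition~\ref{prop:initdata}(i), on $\partial\Dc_{s_0}$ some positive mode of $\phi_{A,s_0,\mathbf{d}}$ already sits on the boundary of its bound, so $s_*(\mathbf{d})=s_0$ there, and $\Psi|_{\partial\Dc_{s_0}}$ coincides with the rescaling of $\Gamma|_{\partial\Dc_{s_0}}$, which has nonzero degree by Proposition~\ref{prop:initdata}(ii). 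Thus $\Psi$ would be a continuous retraction of the topological $2m$-cell $\Dc_{s_0}$ onto its boundary with nonzero degree, contradicting Brouwer's no-retraction theorem. Hence some $\mathbf{d}\in\Dc_{s_0}$ gives $s_*(\mathbf{d})=+\infty$, i.e.\ the associated solution is trapped in $\Vc_A(s)$ for all $s\geq s_0$.

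The main obstacle is the reduction lemma, and within it the sharp ODE for the null mode $q_{2m}$. The potential $V$ is of critical size $\mathcal{O}(1/s)$ on bounded sets of $y$, so a naive projection of $Vq$ generates an $\mathcal{O}(1/s)q_{2m}$ contribution that would obstruct the emergence of the $-2/s$ coefficient. One must expand $V$ to one more order in the intermediate variable $z=ys^{-1/2m}$ and invoke the precise algebraic identities that pin down the value of $B_{m,p}$ in \eqref{def:Bmpth} in order to extract the decisive $-2/s$ coefficient; this is where the formal computation of Section~\ref{sec:22} is promoted to a rigorous one. Once this sharp ODE is in place, all remaining projections and the topological conclusion follow the now-classical Merle--Zaag template.
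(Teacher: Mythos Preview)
Your proposal is correct and follows essentially the same approach as the paper: contradiction, reduction to the positive modes via Proposition~\ref{prop:redu}, transversality, and a topological obstruction (you phrase it as Brouwer's no-retraction theorem, the paper as nonzero degree of the map $\Theta$, which is equivalent). One small imprecision: for the positive modes $0\leq k\leq 2m-1$ the paper only establishes $q_k'=\lambda_k q_k+\mathcal{O}(s^{-2})$, not $\mathcal{O}(s^{-2-\epsilon})$; transversality still follows because the leading term $\lambda_k q_k\sim \lambda_k A s^{-2}$ dominates the $Cs^{-2}$ error once $A$ is large, so your conclusion is unaffected.
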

\begin{proof} From the local Cauchy problem of \eqref{Pb} in $L^\infty(\Rb)$, we see that for each initial data $\phi_{A, s_0, \mathbf{d}} \in \Vc_A(s_0)$, equation \eqref{eq:q} has a unique solution $q(s) \in \Vc_A(s)$ for all $s \in [s_0, s_*)$ with $s_* = s_*(\mathbf{d})$. If $s_* = +\infty$ for some $\mathbf{d} = (d_0, \cdots, d_{2m - 1}) \in \Dc_{s_0}$, we are done. Otherwise, we proceed by contradiction and assume that $s_*(\mathbf{d}) < +\infty$ for all $\mathbf{d} \in \Dc_{s_0}$. By continuity and the definition of $s_*$, we remark that $q(s_*) \in \partial \Vc_A(s_*)$.  We claim the following.
\begin{proposition}[Finite dimensional reduction] \label{prop:redu} There exist $A \gg 1$, $s_0 = s_0(A) \gg 1$ and $\big(d_0, \cdots, d_{2m-1}\big) \in \Dc_{s_0}$ such that the following properties hold: If $q(s)$ is the solution to equation \eqref{eq:q} with initial data \eqref{def:phiAs0d} and $q(s) \in \Vc_A(s)$ for all $s \in [s_0, s_1]$ and $q(s_1) \in \partial \Vc_A(s_1)$, then 
\begin{itemize}
\item[(i)] \textup{(Reduction to a finite dimensional problem)} $\;\Big(q_0, \cdots, q_{2m - 1}\Big)(s_1) \in \partial \left([-As_1^{-2}, As_1^{-2}]^{2m}\right)$.
\item[(ii)] \textup{(Transversality)} There exists $\mu_0 > 0$ such that  $q(s_1  + \mu) \not \in \Vc_A(s_1 + \mu)$ for all $\mu \in (0, \mu_0)$.
\end{itemize} 
\end{proposition}

\noindent Let us postpone the proof of Proposition \ref{prop:redu} to the next section and continue our argument. From $(i)$ of Proposition \ref{prop:redu}, we obtain $\big(q_0, \cdots, q_{2m - 1} \big)(s_*)\in \partial \left([-As_*^{-2}, As_*^{-2}]^{2m}\right)$, and the following mapping is well defined
\begin{align*}
\Theta: \Dc_{s_0} \; &\to \; \partial \left([-1,1]^{2m}\right) \\
 (d_0, \cdots, d_{2m - 1}) \; &\mapsto \; \frac{s_*^2}{A}\big(q_0, \cdots, q_{2m - 1}\big)(s_*).
\end{align*} 
From the transversality given in item $(ii)$ of Proposition \ref{prop:redu}, $\big(q_0, \cdots, q_{2m - 1}\big)$ actually crosses its boundary at $s = s_*$, resulting in the continuity of $s_*$ and $\Theta$. 
Applying again the transversality, we see that if $(d_0, \cdots, d_{2m - 1}) \in \partial \Dc_{s_0}$, then $q(s)$ leaves $\Vc_A(s)$ at $s = s_0$, thus, $s_* = s_0$ and $\Theta_{|\partial \Dc_{s_0}} = \Gamma$, the map defined in item $(ii)$ of Proposition \ref{prop:initdata}. Using that item, we see that the degree of $\Theta$  is not zero. Since $\Theta$ is continuous, this is a contradiction. This concludes the proof of Proposition \ref{prop:Exist}, assuming that Proposition \ref{prop:redu} holds.
\end{proof}

\subsection{Conclusion of Theorem \ref{theo:1}.} From Proposition \ref{prop:Exist}, there exists a solution $q$ to equation \eqref{eq:q} such that $q(s) \in \Vc_A(s)$ for all $s \geq s_0$. From Remark \ref{remark:1}, we deduce that $\|q(s)\|_{L^\infty(\Rb)} \leq C(A)s^{-\frac{1}{2m}}$ for all $s \geq s_0$. The conclusion  of item $(ii)$ then follows from \eqref{def:simvars} and \eqref{def:qwphi}.  Item $(i)$ of Theorem \ref{theo:1} is just a direct consequence of items $(ii)$ and $(iii)$. Because the proof of item $(iii)$ is similar to the classical case $m = 1$, we only sketch the main ideas for the reader's convenience. The existence of the final blowup profile $u^* \in \Cc(\Rb \setminus \{0\})$ follows from the technique of Merle \cite{Mercpam92}. Here we focus on a precise description of the final blowup profile $u^*$ in a neighborhood of the singularity. To do so, we follow the technique of Herrero-Vel\'azquez \cite{HVaihn93} (see also Bebernes-Bricher \cite{BBsima92}, Zaag \cite{ZAAihn98} for a similar approach) by introducing the  auxiliary function
\begin{equation}\label{def:hxi}
h(\xi, \tau; x_0) = (T-t_0(x_0))^\frac{1}{p-1}u(x,t),
\end{equation}
where 
\begin{equation*}
\xi = \frac{x - x_0}{[T - t_0(x_0)]^\frac{1}{2m}}, \quad \tau = \frac{t - t_0(x_0)}{T - t_0(x_0)},
\end{equation*}
and $t_0(x_0)$  is uniquely determined by 
\begin{equation}\label{def:xoto}
|x_0| = K\big[(T-t_0(x_0))|\log(T-t_0(x_0))|\big]^\frac{1}{2m}, \quad K \gg 1\; \textup{fixed}.
\end{equation}
We note that $h(\xi,\tau; x_0)$ is also a solution to \eqref{Pb} because of the invariance of \eqref{Pb} under dilations. From \eqref{eq:asymtheo}, we have
$$\sup_{|\xi| \leq |\log(T-t_0(x_0))|^\frac{1}{4m}} \left|h(\xi, 0, x_0) - \Phi(K)\right| \leq \frac{C}{(T - t_0(x_0))^\frac{1}{2m}} \to 0 \quad \textup{as}\; |x_0| \to 0.$$
Let $\hat h_K(\tau)$ be the solution to \eqref{Pb} with the constant initial datum $\Phi(K)$, defined as
$$\hat{h}_K(\tau) = \kappa\left(1 - \tau + B_{m,p}K^{2m}\right)^{-\frac{1}{p-1}}, \quad \tau \in [0,1).$$
By the continuity with respect to initial data for equation \eqref{Pb}, one can show that
$$\sup_{|\xi| \leq |\log(T-t_0(x_0))|^\frac{1}{4m}, 0 \leq \tau < 1} \left|h(\xi, \tau; x_0 ) - \hat h_K(\tau)\right| \leq \epsilon(x_0) \to 0, \quad \textup{as}\;\; |x_0| \to 0.$$ 
Passing to the limit $\tau \to 1$ yields
$$u^*(x_0) = (T - t_0(x_0))^{-\frac{1}{p-1}}\lim_{\tau \to 1} h(0, \tau; x_0) \sim (T -t_0(x_0))^{-\frac{1}{p-1}}\hat h_K(1).$$
From the definition \eqref{def:xoto}, we compute 
$$\big|\log (T-t_0(x_0))\big| \sim 2m \big|\log |x_0|\big|, \quad T - t_0(x_0) \sim \frac{|x_0|^{2m}}{2m K^{2m}|\log |x_0||} \quad \textup{as} \quad |x_0| \to 0,$$
from which we obtain the asymptotic behavior
$$u^*(x_0) \sim \kappa\left(\frac{B_{m,p}|x_0|^{2m}}{2m|\log |x_0||}\right)^{-\frac{1}{p-1}} \quad \textup{as} \quad |x_0| \to 0.$$
This concludes the proof of Theorem \ref{theo:1}, assuming that Proposition \ref{prop:redu} holds.

\section{Reduction to a finite dimensional problem.} \label{sec:4}
This section is the central part in our analysis where we give all details of the proof of Proposition \ref{prop:redu}, completing hence the proof of Theorem \ref{theo:1}. The essential idea is to project equation \eqref{eq:q} onto different components according to the decomposition \eqref{eq:decomq}. In particular, we claim that Proposition \ref{prop:redu} is a direct consequence of the following.

\begin{proposition}[Dynamics of equation \eqref{eq:q}] \label{prop:dya} For all $A \gg 1$, there exists $s_0 = s_0(A) \gg 1$ such that if $q(s) \in \Vc_A(s)$ for all $s \in [\tau, \tau_1]$ with $\tau_1 \geq \tau \geq s_0$, then the following estimates hold for all $s \in [\tau, \tau_1]$:
\begin{itemize}
\item[(i)] \textup{(Control of the finite dimensional part)} 
\begin{align}
&\left|q_k'(s) - \left(1 - \frac{k}{2m}\right)q_k\right| \leq \frac{C}{s^2} \quad \textup{for}\quad 0 \leq k \leq 2m-1.\label{eq:ODEqk}\\
& \quad  \left|q_{2m}'(s) + \frac{2}{s}q_{2m}\right| \leq \frac{CA^3}{s^3},\label{eq:ODEq2m}\\
& \qquad  |q_j(s)| \leq Ce^{-\left(\frac{j}{2m} - 1\right)(s-\tau)}|q_j(\tau)| + \frac{CA^{j-1}}{s^\frac{j+1}{2m}} \quad \textup{for}\quad 2m+1 \leq j \leq M.\label{eq:ODEqj}
\end{align}
\item[(ii)] \textup{(Control of the infinite dimensional and outer part)}
\begin{align}
&\left\|\frac{q_{_{M, \bot}}(y,s)}{1 + |y|^{M+1}}\right\|_{L^\infty(\Rb)} \leq Ce^{-\frac{M(s-\tau)}{2m}}\left\|\frac{q_{_{M, \bot}}(y,\tau)}{1 + |y|^{M+1}}\right\|_{L^\infty(\Rb)} + \frac{CA^M}{s^\frac{M+2}{2}}.\label{eq:contrq-M}\\
& \quad \left\|q_e(s)\right\|_{L^\infty(\Rb)} \leq C e^{-\frac{(s -\tau)}{2(p-1)}}\|q_e(\tau)\|_{L^\infty(\Rb)} + \frac{CA^{M+1}}{s^\frac{1}{2m}}(1  +s- \tau).\label{eq:contr:qe}
\end{align}
\end{itemize}
\end{proposition}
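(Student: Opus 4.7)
The plan is to project equation \eqref{eq:q} onto the decomposition \eqref{eq:decomq} and treat each component separately: a direct ODE analysis for the finite modes, a Duhamel formula combined with the semigroup estimates of Lemma \ref{lemm:semigLm} for $q_{_{M,\bot}}$, and a localized pointwise argument for $q_e$. As a preliminary step I would gather size information on the three source terms. From \eqref{def:R} and the fact that $\varphi$ solves the ODE \eqref{eq:Phi} at leading order, a direct computation gives $|R(y,s)| \leq C s^{-2}(1+|y|^{2m})$ in the blowup region. The potential $V = p(\varphi^{p-1}-\kappa^{p-1})$ admits the Taylor expansion
\begin{equation*}
V(y,s) = -\frac{p\,B_{m,p}}{p-1}\,\frac{y^{2m}}{s} + \Oc\!\left(\frac{1+|y|^{4m}}{s^2}\right)
\end{equation*}
inside the blowup region and $V = -\frac{p}{p-1} + o(1)$ outside it. Finally $|B(q)|\le C|q|^2$ combined with the pointwise bound of Remark \ref{remark:1} gives smallness for $B(q)$ tested against any $\psi_k^*$.

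For the finite modes $0 \le k \le 2m-1$, taking the scalar product of \eqref{eq:q} against $\psi_k^*$ and using orthogonality \eqref{eq:orths} yields
\begin{equation*}
q_k'(s) = \lambda_k q_k + \bigl\langle V q, \psi_k^*\bigr\rangle + \bigl\langle B(q), \psi_k^*\bigr\rangle + \bigl\langle R, \psi_k^*\bigr\rangle,
\end{equation*}
and bounding each bracket by $C s^{-2}$ (using the $\Vc_A(s)$ bounds and the Gaussian-type decay of $\psi_k^*$) gives \eqref{eq:ODEqk}. The null mode $k=2m$ is the main obstacle of the proof: since $\lambda_{2m}=0$, one must extract the subleading $-\frac{2}{s}q_{2m}$ from $\langle V q,\psi_{2m}^*\rangle$. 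Using the leading expansion of $V$ and the decomposition of $q$, the dominant contribution is
\begin{equation*}
-\frac{p\,B_{m,p}}{(p-1)\,s}\,q_{2m}(s)\,\bigl\langle y^{2m}\psi_{2m}, \psi_{2m}^*\bigr\rangle
\end{equation*}
modulo terms of size $\Oc(A^3 s^{-3})$. The precise values of $B_{m,p}$ and $\bar\mu_{2m}$ given in \eqref{def:Bpm} and \eqref{def:mu2m} combine, via several algebraic identities involving $(2m)!$ and $(4m)!$, to produce exactly the coefficient $-2$, yielding \eqref{eq:ODEq2m}. For stable finite modes $2m+1 \le k \le M$, one exploits that $\psi_k^*$ is orthogonal to all polynomials of degree $<k$, which gains a factor $s^{-(k+1)/(2m)}$ from the source; integrating the resulting linear ODE produces \eqref{eq:ODEqj}.

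For the infinite-dimensional part, the Duhamel formula
\begin{equation*}
q_{_{M,\bot}}(s) = e^{(s-\tau)\Ls_m}q_{_{M,\bot}}(\tau) + \int_\tau^s e^{(s-\sigma)\Ls_m}\Pi_{_{M,\bot}}\bigl[V q + B(q) + R\bigr](\sigma)\,d\sigma,
\end{equation*}
combined with item $(iii)$ of Lemma \ref{lemm:semigLm} applied in the weighted norm $\|\cdot\|/(1+|y|^{M+1})$, yields the stated exponential decay. The source satisfies a bound of order $s^{-(M+2)/(2m)}$ times a polynomial of degree $M+1$, and since $M > \|V\|_{L^\infty_{y,s}}$ (by \eqref{def:M}), the gain $e^{-M(s-\sigma)/(2m)}$ dominates the $\|V\|_\infty$ loss coming from the $Vq$ term, so Gronwall closes the estimate \eqref{eq:contrq-M}. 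For $q_e$, multiplying \eqref{eq:q} by $1-\chi$ produces a transport-diffusion equation whose linear part is essentially $\Ls_m + V$ with $V \approx -p/(p-1)$ on the support of $1-\chi$; hence the effective linear operator acts like $\Ls_m - \frac{1}{p-1}\textup{Id}$, whose semigroup decays like $e^{-(s-\tau)/(p-1)}$. The extra commutator $[\Ls_m,\chi]q$ and the boundary contributions across $|y|\sim Ks^{1/(2m)}$ are bounded by $C s^{-1/(2m)}$ times the inner control of $q$ from Remark \ref{remark:1}, and a standard Duhamel estimation produces \eqref{eq:contr:qe}. The most delicate calculation throughout is the sharp null-mode expansion, where the specific algebraic form of $B_{m,p}$ and $A_{m,p}$ from \eqref{def:Bpm} is responsible for the correct coefficient $-2/s$.
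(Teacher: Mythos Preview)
Your overall architecture matches the paper's proof (projection for finite modes, Duhamel plus Lemma \ref{lemm:semigLm}(iii) for $q_{_{M,\bot}}$, and the $(1-\chi)$-localized equation for $q_e$), but the null-mode step \eqref{eq:ODEq2m} has two genuine gaps.

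First, your treatment of $B(q)$ is insufficient. Invoking $|B(q)|\le C|q|^2$ together with the global bound of Remark \ref{remark:1} only gives $|B(q)|\le Cs^{-1/m}$, which is far too weak even for \eqref{eq:ODEqk}. Using instead the polynomial-weighted bound $|q(y,s)|\le CA^{M+1}s^{-1-1/m}(1+|y|^{M+1})$ yields $|\Pi_{2m}(B(q))|\le Cs^{-2-2/m}$, but for $m\ge 3$ this is still strictly larger than $s^{-3}$ and cannot close \eqref{eq:ODEq2m}. The paper (Lemma \ref{lemm:projBq}) needs a further decomposition $q_<^2=I_1+I_2+I_3$ and the key orthogonality observation that $\langle \psi_{i'}\psi_{l'},\psi_{2m}^*\rangle=0$ unless $i'+l'\in 2m\mathbb{N}$, which forces $i'+l'\ge 6m$ in the dangerous quadratic terms and restores the $s^{-3}$ bound. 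You have not identified this mechanism.

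Second, your stated bound $|R(y,s)|\le Cs^{-2}(1+|y|^{2m})$ only yields $\Pi_{2m}(R)=\Oc(s^{-2})$, again too large for \eqref{eq:ODEq2m}. The paper (Lemma \ref{lemm:exR}) shows that the $s^{-2}$-coefficient $R_1(y)$ has \emph{vanishing} degree-$2m$ term, so that $\langle R_1,\psi_{2m}^*\rangle=0$ and $|\Pi_{2m}(R)|\le Cs^{-3}$. This cancellation is where the precise value of $A_{m,p}$ (not just $B_{m,p}$) enters: the choice \eqref{def:Bpm} makes both the $s^{-1}$-coefficient of $R$ and the $y^{2m}$-coefficient of $R_1$ identically zero. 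Your proposal attributes the role of $A_{m,p}$ only to the $Vq$ computation; in fact your leading expansion of $V$ omits the constant term $pA_{m,p}/(\kappa s)$ coming from $\varphi=\Phi+A_{m,p}/s$, and without it the claimed identity $-\frac{pB_{m,p}}{(p-1)s}\,q_{2m}\langle y^{2m}\psi_{2m},\psi_{2m}^*\rangle=-\frac{2}{s}q_{2m}$ is false (a residual term of order $s^{-1}q_{2m}$ survives). The paper avoids this by writing $V_1=-\frac{2}{\bar\mu_{2m}}\psi_{2m}$ directly, which packages both the $y^{2m}$ and constant contributions.
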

\begin{remark} Note that the sharp ODE \eqref{eq:ODEq2m} comes from the precise choice of the constant $B_{m,p}$ appearing in the profile $\Phi$ defined in \eqref{def:Phipro}. Other choices would give the error of size $\Oc\left(\frac{1}{s^2}\right)$ which is too large to close the estimate for $q_{2m}$. 
\end{remark}

\medskip

Let us postpone the proof of Proposition \ref{prop:dya} and proceed with the proof of Proposition \ref{prop:redu}.
\begin{proof}[Proof of Proposition \ref{prop:redu} assuming Proposition \ref{prop:dya}] Since the argument of the proof is similar to what was done  in \cite{MZdm97}, we only sketch the proof for the reader's convenience. We recall from the assumption that 
\begin{equation}\label{eq:qinVaassump}
q(s) \in \Vc_A(s) \quad \textup{for all} \quad s \in [s_0, s_1] \quad \textup{and}\quad q(s_1) \in \partial \Vc_A(s_1).
\end{equation}
Thus, part $(i)$ of Proposition \ref{prop:redu} will be proved if we show that all the bounds given in Definition \ref{def:VA} can be improved, except for the first $2m$ components $q_k$ with $0 \leq k \leq 2m-1$, in the sense that for all $s \in [s_0, s_1]$: 
\begin{align}
|q_{2m}(s)| < \frac{A^2 \log s}{s^2}, \quad |q_{k}(s)| < \frac{A^k}{s^\frac{k + 1}{2m}} \quad \textup{for}\quad 2m+1 \leq k \leq M, \label{eq:contrq2mqk}\\
\left\|\frac{q_{_{M, \bot}}(y,s)}{1 + |y|^{M+1}}\right\|_{L^\infty} < \frac{A^{M+1}}{s^\frac{M+2}{2m}}, \quad \|q_e(s)\|_{L^\infty} < \frac{A^{M+2}}{s^\frac{1}{2m}}.\label{eq:contrqbqe}
\end{align}
We first deal with $q_{2m}$.  We argue by contradiction by assuming that there is $s_* \in [s_0, s_1]$ such that for all $s \in [s_0, s_*)$,
$$|q_{2m}(s)| < \frac{A^2\log s}{s^2}, \quad |q_{2m}(s_*)| = \frac{A^2 \log s_*}{s_*^2},$$
(note that the existence of $s_*$ is guaranteed by item $(i)$ of Proposition \ref{prop:initdata}). Considering $q_{2m}(s_*) > 0$ and using minimality (the case $q_{2m}(s_*) < 0$ is similar), we have on the one hand, 
$$q'_{2m}(s_*) \geq A^2\frac{d}{ds}\left(\frac{\log s_*}{s_*^2}\right) = \frac{A^2}{s_*^3} - \frac{2A^2 \log s_*}{s_*^3},$$
which holds thanks to \eqref{eq:qinVaassump}. On the other hand, we obtain from the ODE \eqref{eq:ODEq2m} that 
$$q'_{2m}(s_*) \leq \frac{-2A^2\log s_*}{s_*^3} + \frac{CA}{s_*^3},$$
and the contradiction follows if $A \geq 2C + 1$.\\
As for $q_j$ with $2m+1 \leq j \leq M$ and $q_{_{M,\bot}}$ and $q_e$, we argue as follows: Let $\lambda = \log A$ and assume that  $s_0 \geq \lambda$ such that for all $\tau \geq s_0$ and $s \in [\tau, \tau + \lambda]$ we have 
$$\tau \leq s \leq \tau  + \lambda \leq \tau + s_0 \leq 2\tau,\quad  \textup{hence,}\quad \frac{1}{\tau} \sim \frac{1}{s}.$$
We distinguish into two cases:\\
- For $s - s_0 \leq \lambda$, we use estimates \eqref{eq:ODEqj}, \eqref{eq:contrq-M} and \eqref{eq:contr:qe} with $\tau = s_0$ together with Proposition \ref{prop:initdata} to find that 
\begin{align*}
&|q_j(s)| \leq \frac{C(1 + A^{j-1})}{s^\frac{j+1}{2m}} < \frac{A^j}{s^\frac{j+1}{2m}} \quad \textup{for} \quad 2m+1 \leq j \leq M,\\
& \left\|\frac{q_{_{M, \bot}}(y,s)}{1 + |y|^{M+1}} \right\|_{L^\infty} \leq \frac{C(1 + A^M)}{s^\frac{M+2}{2m}} < \frac{A^{M+1}}{s^\frac{M+2}{2m}},\\
& \|q_e(s)\|_{L^\infty} \leq \frac{C(1 + A^{M+1} \log A)}{s^\frac{1}{2m}} < \frac{A^{M+2}}{s^\frac{1}{2m}}, 
\end{align*} 
for $A$ large enough. Hence, estimates \eqref{eq:contrq2mqk} and \eqref{eq:contrqbqe} hold for $s -s_0 \leq \lambda$. \\
- For $s - s_0 > \lambda$, we use estimates \eqref{eq:ODEqj}, \eqref{eq:contrq-M} and \eqref{eq:contr:qe} with $\tau = s - \lambda > s_0$ together with \eqref{eq:qinVaassump} to write (remember that $s \geq \lambda/2$) 

\begin{align*}
&|q_j(s)| \leq e^{-\left(\frac{j}{2m} - 1\right) \lambda} \frac{A^j}{(s/2)^\frac{j  +1}{2m}} + \frac{CA^{j-1}}{s^\frac{j+1}{2m}} < \frac{A^j}{s^\frac{j+1}{2m}} \quad \textup{for}\;\; 2m + 1 \leq j \leq M,\\
& \left\|\frac{q_{_{M, \bot}}(y,s)}{1 + |y|^{M+1}} \right\|_{L^\infty} \leq Ce^{-\frac{M\lambda}{4m}}\frac{A^{M+1}}{(s/2)^\frac{M+2}{2m}} + \frac{CA^M}{s^\frac{M+2}{2}} < \frac{A^{M+1}}{s^\frac{M+2}{2m}},\\
& \|q_e(s)\|_{L^\infty} \leq Ce^{-\frac{\lambda}{2(p-1)}}\frac{A^{M+2}}{(s/2)^\frac{1}{2m}} + \frac{CA^{M+1}(1 + \log A)}{s^\frac{1}{2m}} < \frac{A^{M+2}}{s^\frac{1}{2m}}. 
\end{align*}
Therefore, estimates \eqref{eq:contrq2mqk} and \eqref{eq:contrqbqe} hold for all $s \in [s_0, s_1]$, hence, the conclusion of part $(i)$ of Proposition \ref{prop:redu} follows.\\

Part $(ii)$ of Proposition \ref{prop:redu} is a direct consequence of the dynamics of $q_k$ given in \eqref{eq:ODEqk} and \eqref{eq:qinVaassump}. Indeed, from part $(i)$ of Proposition \ref{prop:dya}, we know that $q_k(s_1) = \epsilon \frac{A}{s_1^2}$ for  some $k \in \{0, \cdots, 2m-1\}$ and $\epsilon = \pm 1$. Using estimate \eqref{eq:ODEqk} yields
$$\epsilon q'_k(s_1) \geq \left(1 - \frac{k}{2m}\right)\epsilon q_k(s_1) - \frac{C}{s_1^2} \geq \left( (1 - \frac{k}{2m})A - C\right)\frac{1}{s_1^2}.$$ 
Thus, for $0 \leq k\leq \frac{k}{2m}$ and $A$ large enough, we have $\epsilon q_k'(s_1) > 0$. Therefore, $q_k$ is traversal outgoing to the boundary curve $s \mapsto \epsilon As^{-2}$ at $s = s_1$. This concludes the proof of Proposition \ref{prop:redu}, assuming that Proposition \ref{prop:dya} holds.
\end{proof}

We now give the proof of Proposition \ref{prop:dya} to complete the proof of Proposition \ref{prop:redu}. We divide the proof into two subsections according to the two parts of Proposition \ref{prop:dya}.
\subsection{Control of the finite dimensional part.}
We prove item $(i)$ of Proposition \ref{prop:dya} in this part. A direct projection of equation \eqref{eq:q} on the eigenfunction $\psi_k$ for $0 \leq k \leq M$ yields 
\begin{equation}\label{eq:idqk}
q_k'(s) - \left(1 - \frac{k}{2m}\right)q_k(s) = \Pi_k\Big(Vq + B(q) + R\Big). 
\end{equation}
\paragraph{Estimate of $\Pi_k(Vq)$.} We claim the following.
\begin{lemma}[Expansion of $V$] \label{lemm:V} The potential $V$ defined by \eqref{def:V} satisfies the estimate
\begin{equation}\label{est:V1}
|V(y,s)| \leq \frac{C(1 + |y|^{2m})}{s}, \quad \forall y \in \Rb, \; s \geq 1,
\end{equation}
and admits the following uniform expansion for all $n \in \mathbb{N}^*$,
\begin{equation}\label{est:Vk}
V(y,s) = \sum_{j = 1}^n \frac{1}{s^j}V_{j}(y) + \Oc\left(\frac{1 + |y|^{2m(n+1)}}{s^{n+1}}\right), \quad \forall |y| \leq s^\frac{1}{2m},\; s > 1,
\end{equation}
where $V_j$'s are even polynomial of degree $2mj$. More precisely, we have 
\begin{equation}\label{def:V1}
V_1(y) = -\frac{2}{\bar \mu_{2m}}\psi_{2m} \quad \textup{with}\quad \bar \mu_{2m} = \big<\psi_{2m}^2, \psi_{2m}^*\big>.
\end{equation}
\end{lemma}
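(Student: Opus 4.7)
The plan is to exploit that the approximate profile $\varphi$ depends on $(y,s)$ only through the two variables $z = B_{m,p} y^{2m}/s$ and $\mu = 1/s$, and that on the region $|y|\le s^{1/(2m)}$ one has $z \in [0, B_{m,p}]$ while $\varphi$ stays bounded above and away from zero. Everything then reduces to Taylor expansion in $\mu$ at $\mu = 0$ with $y$ frozen.

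For the pointwise bound \eqref{est:V1}, I would split at $|y|^{2m}=s$. In the inner regime $|y|^{2m}\le s$, the quantity $\varphi_0 := \kappa(1+B_{m,p}y^{2m}/s)^{-1/(p-1)}$ lies in $\bigl[\kappa(1+B_{m,p})^{-1/(p-1)},\kappa\bigr]$, so $\varphi = \varphi_0 + A_{m,p}/s$ is bounded above and away from zero; the mean value theorem for $t\mapsto t^{p-1}$ then gives $|V|\le C|\varphi-\kappa|$, and a direct computation using $(1+u)^{-1/(p-1)}-1 = O(u)$ for $u\in[0,B_{m,p}]$ yields $|\varphi-\kappa|\le C(1+|y|^{2m})/s$. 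In the outer regime $|y|^{2m}\ge s$, one simply has $0 < \varphi^{p-1}\le C$, hence $|V|\le C\le C(1+|y|^{2m})/s$, completing (i).

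For the expansion \eqref{est:Vk}, I would introduce
\[
H(y,\mu) := p\bigl[\bigl(\kappa(1+B_{m,p}y^{2m}\mu)^{-1/(p-1)} + A_{m,p}\mu\bigr)^{p-1} - \kappa^{p-1}\bigr],
\]
so that $V(y,s) = H(y,1/s)$ and $H(y,0)=0$, and apply Taylor's formula in $\mu$ at $\mu=0$ to order $n$. Setting $V_j(y) := \partial_\mu^j H(y,0)/j!$, an easy induction shows that $V_j$ is an even polynomial in $y$ of degree $2mj$, because each $\mu$-derivative falling on $(1+B_{m,p}y^{2m}\mu)^{-1/(p-1)}$ brings down exactly one factor of $y^{2m}$ (while $\partial_\mu(A_{m,p}\mu)$ contributes no $y$ factor and dies under a second differentiation). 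On the region $|y|\le s^{1/(2m)}$, i.e.\ $y^{2m}\mu\le 1/B_{m,p}$, both $(1+B_{m,p}y^{2m}\mu)$ and $\varphi$ are bounded away from zero, so $|\partial_\mu^{n+1}H(y,\tilde\mu)|\le C_n(1+|y|^{2m(n+1)})$ uniformly in $\tilde\mu\in[0,\mu]$, and the Lagrange form of the remainder delivers exactly $|R_n|\le C(1+|y|^{2m(n+1)})/s^{n+1}$.

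Finally, to identify $V_1$ I would differentiate directly:
\[
\partial_\mu H(y,0) = p(p-1)\kappa^{p-2}\,\partial_\mu\varphi\big|_{\mu=0} = -\frac{p B_{m,p}}{p-1}y^{2m} + \frac{p A_{m,p}}{\kappa}.
\]
Substituting $\kappa^{p-1}=1/(p-1)$ together with the explicit formulas \eqref{def:Bpm} for $B_{m,p}$ and $A_{m,p}$, and matching with $\psi_{2m}(y) = (y^{2m}+(-1)^m(2m)!)/\sqrt{(2m)!}$, one recovers $V_1 = -\frac{2}{\bar\mu_{2m}}\psi_{2m}$. The only genuinely delicate point is this algebraic identification: the precise value of $A_{m,p}$ chosen in Section \ref{sec:22} is exactly what forces the constant term of $V_1$ to combine with the $y^{2m}$ term into the eigenfunction $\psi_{2m}$, which is in turn what will allow the extraction of the sharp ODE \eqref{eq:ODEq2m} for $q_{2m}$ later; the rest is routine bookkeeping for Taylor expansions of smooth composite functions.
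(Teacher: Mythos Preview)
Your proof is correct and follows essentially the same approach as the paper, which gives only a three-line sketch (``trivial'', ``Taylor expansion in the variable $z = |y|^{2m}/s$'', ``from the definitions''). Your choice to expand in $\mu = 1/s$ at fixed $y$ rather than in $z$ is a cosmetic reparametrization that handles the $A_{m,p}/s$ term more cleanly, and your explicit verification of the identity $V_1 = -\frac{2}{\bar\mu_{2m}}\psi_{2m}$ via \eqref{def:Bpm} is exactly the computation the paper leaves implicit.
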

\begin{proof} Estimate \eqref{est:V1} is trivial.  Estimate \eqref{est:Vk} follows from a Taylor expansion in the variable $z = \frac{|y|^{2m}}{s}$. Formula \eqref{def:V1} comes from the definitions \eqref{def:appPro} and \eqref{def:psibeta} of $\varphi$ and $\psi_{2m}$.
\end{proof}
From Lemma \ref{lemm:V}, we obtain the following estimate for $\Pi_k(Vq)$.
\begin{lemma}[Estimate of $\Pi_k(Vq)$] \label{lemm:projVq} Under the assumption of Proposition \ref{prop:dya}, we have 
\begin{align*}
&\Big|\Pi_k(Vq)\Big| \leq \frac{C}{s^2} \quad \textup{for}\quad 0 \leq k \leq 2m-1,\\
& \quad \left|\Pi_{2m}(Vq) + \frac{2}{s}q_{2m}\right| \leq \frac{C A}{s^3},\\
& \qquad \Big|\Pi_k(Vq)\Big| \leq \frac{CA^{k-2}}{s^\frac{k+1}{2m}} \quad \textup{for}\quad 2m + 1 \leq k \leq M.
\end{align*}
\end{lemma}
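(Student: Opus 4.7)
The plan is to split the integral using the cutoff: $\Pi_k(Vq) = \int \chi V q\, \psi_k^*\,dy + \int V q_e\, \psi_k^*\,dy$. The outer piece is exponentially small in $s$, because $q_e$ is supported in $\{|y|\geq Ks^{1/(2m)}\}$ where $|\psi_k^*|\leq C e^{-a|y|^\nu}$ by \eqref{def:psibeta*}--\eqref{est:F}, while $\|V\|_\infty$ is bounded and $\|q_e\|_\infty\leq A^{M+2}s^{-1/(2m)}$ in $\Vc_A(s)$. Thus the analysis reduces to the inner integral, where on the support of $\chi$ one has $|y|\leq 2Ks^{1/(2m)}$, so the expansion \eqref{est:Vk} applies with $K$-dependent constants: $V = s^{-1}V_1 + s^{-2}V_2 + \mathcal{O}\big((1+|y|^{2m(n+1)})/s^{n+1}\big)$.

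Next, I would substitute the modal decomposition $q = \sum_{j=0}^M q_j\psi_j + q_{_{M,\bot}}$ and compute each contribution. The $(j,l)$-piece becomes $(q_j/s^l)\int \chi V_l \psi_j \psi_k^*\,dy$, which up to an exponentially small cutoff error equals $c_{j,k}^{(l)} q_j/s^l$ with $c_{j,k}^{(l)} = \int V_l\psi_j\psi_k^*\,dy$, a finite constant since $V_l\psi_j$ is a polynomial and $\psi_k^*$ decays like $e^{-a|y|^\nu}$. The $q_{_{M,\bot}}$-contribution is bounded using its pointwise estimate from $\Vc_A(s)$ against the polynomial-times-exponential integrand. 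The sharp identity for $k=2m$ comes from the $(l,j)=(1,2m)$ piece alone: by \eqref{def:V1}, $V_1=-(2/\bar\mu_{2m})\psi_{2m}$, hence $c_{2m,2m}^{(1)} = -(2/\bar\mu_{2m})\langle\psi_{2m}^2,\psi_{2m}^*\rangle = -2$, yielding the announced $-(2/s)q_{2m}$.

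All remaining pieces must then be shown small. For $k=2m$: odd $j$ vanish by parity (each of $\psi_{2m}$, $\psi_{2m}^*$ is even, while $\psi_j$ has the parity of $j$), even $j\in\{0,2,\ldots,2m-2\}$ contribute $\mathcal{O}(A/s^3)$ using $|q_j|\leq A/s^2$, and even $j\geq 2m+2$ yield $\mathcal{O}\big(A^j s^{-1-(j+1)/(2m)}\big)$, absorbed into $CA/s^3$ by taking $s_0$ large in $A$; higher-order Taylor terms ($l\geq 2$) and the remainder add only lower-order corrections. For $0\leq k\leq 2m-1$, the same bookkeeping gives each term of size at most $A^2\log s/s^3$, absorbed into $C/s^2$. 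For $2m+1\leq k\leq M$, the orthogonality remark after Proposition~\ref{prop:specL} forces $c_{j,k}^{(l)}=0$ whenever $2ml+j < k$, so the smallest surviving mode at $l=1$ is $j=k-2m$; its contribution is $\mathcal{O}(A/s^3)$ (since $|q_{k-2m}|\leq A/s^2$), which sits comfortably below the required $CA^{k-2}/s^{(k+1)/(2m)}$.

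The main obstacle is the precise cancellation in the $k=2m$ case: extracting the universal coefficient $-2$ in front of $q_{2m}/s$ depends on $V_1$ being an exact scalar multiple of $\psi_{2m}$, which in turn rests on the sharp choice of $B_{m,p}$ in the profile $\varphi$. Any other normalization would leave an $\mathcal{O}(1/s)$ residual that could not be absorbed into the $CA/s^3$ remainder, breaking the subsequent refined ODE analysis of $q_{2m}$.
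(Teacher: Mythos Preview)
Your overall strategy---cutoff splitting, modal decomposition of $q$, Taylor expansion of $V$, and orthogonality---matches the paper's, and your treatment of the cases $0\leq k\leq 2m-1$ and $2m+1\leq k\leq M$ is essentially correct. However, there is a genuine gap in your $k=2m$ analysis when $m\geq 3$.

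You claim that, after extracting the $(l,j)=(1,2m)$ piece, the even modes $j\geq 2m+2$ contribute $\mathcal{O}\big(A^j s^{-1-(j+1)/(2m)}\big)$ and that this is ``absorbed into $CA/s^3$ by taking $s_0$ large in $A$''. This fails: for even $j$ with $2m+2\leq j\leq 4m-2$ (a nonempty range once $m\geq 3$), the exponent is
\[
1+\frac{j+1}{2m}\leq 1+\frac{4m-1}{2m}=3-\frac{1}{2m}<3,
\]
so for instance at $j=2m+2$ you would need $A^{2m+1}\leq C s^{(3-2m)/(2m)}$, which is impossible since the right-hand side tends to zero. Parity does not help here: $\psi_{2m}$, $\psi_{2m}^*$ and $\psi_j$ (for even $j$) are all even, so the integrand is even and the integral has no reason to vanish on parity grounds alone.

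The missing ingredient is a structural cancellation, which is how the paper handles it. Since $V_1$ is a scalar multiple of $\psi_{2m}$, one must show $\langle\psi_{2m}\psi_i,\psi_{2m}^*\rangle=0$ for all $2m+1\leq i\leq 4m-1$ (even or odd). Writing out $\psi_{2m}$ and $\psi_i$ via \eqref{def:psibeta}, the product $\psi_{2m}\psi_i$ is a linear combination of monomials of degrees $i+2m$, $i$, and $i-2m$, hence a linear combination of $\psi_{i+2m}$, $\psi_i$, $\psi_{i-2m}$; for $i$ in that range none of these indices equals $2m$, so the orthogonality \eqref{eq:orths} kills the integral outright. With this in hand, the first surviving mode at $l=1$ is $i=4m$, whose contribution is $\mathcal{O}\big(A^{4m}s^{-3-1/(2m)}\big)$, now genuinely absorbable into $CA/s^3$.
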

\begin{proof}  By Definition \ref{def:VA}, we have 
\begin{equation}\label{est:qVA}
|q(y,s)| \leq \frac{CA^{M+1}}{s^{1 + \frac{1}{m}}}(1 + |y|^{M+1}) \quad \textup{for all} \;\; y \in \Rb.
\end{equation}
Using this, the estimate \eqref{est:V1} and noting from the definition \eqref{def:psibeta*} and \eqref{est:F} that $\psi_k^*$ is exponentially decaying, we obtain for $0 \leq k \leq 2m-1$,
\begin{align*}
\left|\Pi_k(Vq)\right| \leq \frac{CA^{M+1}}{s^{2 + \frac{1}{m}}}\int_{\Rb} (1 + |y|^{M+1+2m})\psi_k^*(y)dy \leq \frac{C}{s^2}. 
\end{align*}
For $2m + 1 \leq k \leq M$, we write from the decomposition \eqref{eq:decomq},
\begin{align*}
\Pi_k(Vq) = \sum_{i = 0}^{M}q_i(s)\int_{\Rb} V(y,s)\psi_i(y) \psi_k^*(y) dy + \int_{\Rb}V(y,s) q_{_{M, \bot}}(y,s) \psi_k^*(y)dy.
\end{align*}
Using \eqref{est:V1} and the  bound of $q_{_{M, \bot}}$ given in Definition \ref{def:VA} yields 
$$\left|\int_{\Rb}V(y,s) q_{_{M, \bot}}(y,s) \psi_k^*(y)dy\right| \leq  \frac{CA^{M+1}}{s^{1 + \frac{M+1}{2m}}} \int_{\Rb}(1 + |y|^{M + 1 + 2m}) \psi_k^*(y)dy \leq \frac{CA^{M+1}}{s^{1 + \frac{M+1}{2m}}}.$$
From the orthogonality \eqref{eq:orths}, we note that $\big<P_n, \psi_k^*\big> = 0$ for all polynomial $P_n$ of degree $n \leq k - 1$. We then use \eqref{est:Vk} and \eqref{est:qVA} to estimate 
\begin{align*}
\sum_{i = 0}^{M}q_i(s)\int_{\Rb} V(y,s)\psi_i(y) \psi_k^*(y) dy &= \sum_{i = 0}^M\sum_{j = 1}^n \frac{q_i(s)}{s^j}\int_{|y| \leq s^\frac{1}{2m}} V_j(y)\psi_i(y)\psi_k^*(y)dy \\
& \quad + \Oc\left(\frac{A^{M+1}}{s^{n + 1 + \frac{1}{m}}} \int_{|y| \leq s^\frac{1}{2m}}(1 + |y|^{2m(n+1) + M +1})|\psi^*_k(y)|dy \right)\\
& \qquad + \Oc\left(\frac{A^{M+1}}{s^{2 + \frac{1}{m}}} \int_{|y| \geq s^\frac{1}{2m}}(1 + |y|^{2m + M + 1})|\psi^*_k(y)|dy \right).
\end{align*}
The last term is bounded by $\Oc(e^{-cs})$ because of the exponential decay of $\psi_k^*$. By taking $n \in \mathbb{N}^*$ such that $n + 1 + \frac{1}{m} \geq \frac{k + 2}{2m}$, the second term is bounded by $\Oc\left(A^{M+1}s^{- \frac{k + 2}{2m}}\right)$. Using the fact that $V_j \psi_i$ is polynomial of degree $2mj + i$, we see that $\big<V_j \psi_i, \psi_k^*\big> = 0$ for $2mj + i \leq k - 1$. Combining this with the bounds of $q_i$ given in Definition \ref{def:VA} of $\Vc_A$, we obtain the rough estimate 
\begin{align*}
&\left|\sum_{i = 0}^M\sum_{j = 1}^n \frac{q_i(s)}{s^j}\int_{|y| \leq s^\frac{1}{2m}} V_j(y)\psi_i(y)\psi_k^*(y)dy\right| \leq \sum_{i = 0}^M \sum_{j = 1, 2jm + i \geq k}^n \frac{CA^i}{s^{j + \frac{i + 1}{2m}}}\\
& \quad \leq \frac{CA^{k-2}}{s^\frac{k+1}{2m}} + \sum_{i = k-1}^M \frac{CA^M}{s^{1 + \frac{i + 1}{2m}}} \leq \frac{CA^{k-2}}{s^\frac{k+1}{2m}}.
\end{align*}
As for $k = 2m$, we need to use the precise definition \eqref{def:V1} of $V_1$ and process similarly as for $k \geq 2m+1$. Indeed, from decomposition \eqref{eq:decomq} and expansion \eqref{est:Vk} with $n = 2$, we have
\begin{align*}
\Pi_{2m}(Vq) - \frac{q_{2m}}{s}\int_{|y| \leq s^\frac{1}{2m}}V_1(y)\psi_{2m}\psi_{2m}^* dy  = \sum_{i = 0, i \ne 2m}^M \frac{q_i(s)}{s} \int_{|y| \leq s^\frac{1}{2m}} V_1 \psi_i \psi_{2m}^* + \Oc\left(\frac{A^{M+1}}{s^{3 + \frac{1}{m}}}\right).
\end{align*}
Using the definition \eqref{def:V1} of $V_1$, the bound of $q_i$ given in Definition \ref{def:VA}  and the fact coming from the definition \eqref{def:psibeta} of $\psi_\beta$ and the orthogonality \eqref{eq:orths} that 
$$\int_{\Rb}\psi_{2m} \psi_i \psi_{2m}^*dy = \int_{\Rb} \big(c_0\psi_{2m + i} + c_1\psi_i + c_2 \psi_{i - 2m}\big)\psi_{2m}^* = 0 \quad \textup{for}\;\; 2m +1  \leq i \leq 4m - 1,$$
we arrive at
$$\left|\Pi_{2m}(Vq) + \frac{2}{s}q_{2m}\right| \leq \sum_{i = 0}^{2m-1} \frac{CA}{s^3} + \sum_{4m}^M \frac{CA^M}{s^{1 + \frac{i + 1}{2m}}} + \frac{CA^{M+1}}{s^{3 + \frac{1}{m}}} \leq \frac{CA}{s^3}.$$
This concludes the proof of Lemma \ref{lemm:projVq}.
\end{proof}

We now turn to the estimate of the main contribution of the nonlinear term  $B(q)$ under the projection $\Pi_k$. We begin with the following expansion. 
\begin{lemma}[Expansion of $B(q)$] \label{lemm:exBq} For all $|q| < 1$, $s \geq s_0 \gg 1$ and $|y| \leq s^\frac{1}{2m}$, the function $B(q)$ defined by \eqref{def:B} admits the uniform expansion
\begin{equation}\label{eq:exBq}
\left|B(q) - \sum_{j = 2}^{M+1} q^j \sum_{l = 0}^M \frac{1}{s^l} \left(B_{j,l}(ys^{-\frac{1}{2m}})  + \tilde{B}_{j,l}(y,s)\right)\right| \leq C|q|^{M+2} + \frac{C}{s^{M+1}},
\end{equation}
where $B_{j,l}(z)$'s are even polynomials of degree $l$ and 
$$|\tilde{B}_{j,l}(y,s)| \leq \frac{C(1 + |y|^{M+1})}{s^\frac{M+1}{2m}}.$$
Furthermore, we have the estimate for all $y \in \Rb$ and $s \geq 1$,
\begin{equation}\label{eq:Bqbound}
|B(q)| \leq C|q|^{\bar p} \quad \textup{with} \;\; \bar p = \min\{2,p\}.
\end{equation}
\end{lemma}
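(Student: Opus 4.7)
The plan is to treat $B(q)$ as a Taylor remainder of $f(x)=x|x|^{p-1}$ at $x=\varphi$ beyond the first-order term, and then apply two further Taylor expansions (in $1/s$ and in $z=ys^{-1/2m}$) to the resulting coefficients $\varphi^{p-j}$. The key observation is that on the region $|y|\le s^{1/2m}$ we have $\varphi(y,s)=\Phi(z)+A_{m,p}/s$ with $\Phi(z)\geq c_0>0$ uniformly for $|z|\leq 1$ (using $m$ odd, so that $B_{m,p}>0$), hence for $s_0$ large enough $\varphi+q$ stays bounded below by a positive constant whenever $|q|$ is small. This regularity licenses Taylor's theorem applied to $x^{p}$ at $\varphi$.

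First I would apply Taylor's theorem to $x\mapsto x^{p}$ at $x=\varphi>0$, which yields
\begin{equation*}
B(q)=\sum_{j=2}^{M+1}\binom{p}{j}\varphi^{p-j}q^{j}+r_{M+2}(q,\varphi),\qquad |r_{M+2}|\le C|q|^{M+2}.
\end{equation*}
Next I would expand $\varphi^{p-j}=\Phi(z)^{p-j}\bigl(1+A_{m,p}/(s\,\Phi(z))\bigr)^{p-j}$ via the binomial series, truncated at order $M$, to obtain
\begin{equation*}
\varphi^{p-j}=\sum_{l=0}^{M}\frac{\binom{p-j}{l}A_{m,p}^{l}}{s^{l}}\,\Phi(z)^{p-j-l}+O(s^{-M-1}),
\end{equation*}
uniformly on $|z|\leq 1$. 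Finally, since $\Phi(z)^{p-j-l}$ is smooth and even in $z$ (in fact a smooth function of $z^{2m}$) on $|z|\leq 1$, I would let $B_{j,l}(z)$ denote its Taylor polynomial of order $l$ around $z=0$, which is automatically even and of degree at most $l$, and set $\tilde B_{j,l}(y,s):=\Phi(z)^{p-j-l}-B_{j,l}(z)$. Standard Taylor estimates then give $|\tilde B_{j,l}|\le C|z|^{l+1}\le C(1+|y|^{M+1})s^{-(M+1)/2m}$ in the region $|y|\le s^{1/2m}$. Assembling the three expansions and absorbing all cross-remainders into the two admissible error types $|q|^{M+2}$ and $s^{-M-1}$ produces \eqref{eq:exBq}.

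For the global bound \eqref{eq:Bqbound}, no Taylor expansion is available since $y$ is unrestricted and $\varphi$ may be small. Instead I would use the classical pointwise inequalities for $f(x)=x|x|^{p-1}$: for $p\geq 2$ one has $|f(a+b)-f(a)-pb|a|^{p-1}|\le C(|a|^{p-2}b^{2}+|b|^{p})$, while for $1<p<2$ one has $|f(a+b)-f(a)-pb|a|^{p-1}|\le C|b|^{p}$. Applying these with $a=\varphi$, $b=q$, and using that $\varphi$ is uniformly bounded on $\Rb\times[1,\infty)$, both bounds collapse to $|B(q)|\leq C|q|^{\bar p}$ with $\bar p=\min\{2,p\}$ (under the implicit requirement that $|q|$ is bounded, which is automatic in the shrinking-set context of Definition~\ref{def:VA} where this lemma will be invoked).

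The principal difficulty is the bookkeeping of remainders across the two nested expansions in $1/s$ and in $z$: each coefficient $\Phi(z)^{p-j-l}$ must be truncated at the right polynomial order so that its Taylor remainder, multiplied by $s^{-l}q^{j}$ and summed over $j,l$, still lands inside the error budget $C|q|^{M+2}+Cs^{-M-1}$ together with the structured $q^{j}s^{-l}\tilde B_{j,l}$ terms. The particular exponent $(M+1)/2m$ appearing in the $\tilde B_{j,l}$ bound comes directly from the identity $|z|^{l+1}\le|z|^{M+1}=|y|^{M+1}s^{-(M+1)/2m}$ valid for $l\le M$ on the expansion region.
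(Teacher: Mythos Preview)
Your three-step strategy---Taylor in $q$, then in $1/s$ via the binomial series, then in $z=ys^{-1/2m}$---is exactly what the paper sketches, and your argument for the global bound \eqref{eq:Bqbound} via the standard pointwise inequalities for $x\mapsto x|x|^{p-1}$ is correct and in fact more detailed than what the paper provides.

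There is, however, a concrete error in your final step. You claim that $|z|^{l+1}\le |z|^{M+1}$ on the expansion region, but on $|y|\le s^{1/2m}$ one has $|z|\le 1$, and for $|z|\le 1$ and $l\le M$ the inequality is reversed: $|z|^{l+1}\ge |z|^{M+1}$. Concretely, for $|y|\le 1$ your Taylor remainder satisfies $|\tilde B_{j,l}|\le C|z|^{l+1}\le C\,s^{-(l+1)/2m}$, which for $l<M$ is much larger than the required bound $C(1+|y|^{M+1})s^{-(M+1)/2m}\sim s^{-(M+1)/2m}$. So the asserted control of $\tilde B_{j,l}$ fails.

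The fix is immediate: in the third expansion, take the Taylor polynomial of (each constant multiple of) $\Phi(z)^{p-j-l}$ to order $M$ in $z$, not to order $l$. The remainder is then $O(|z|^{M+1})=O\bigl(|y|^{M+1}s^{-(M+1)/2m}\bigr)\le C(1+|y|^{M+1})s^{-(M+1)/2m}$, which is precisely the target. The price is that the polynomials $B_{j,l}$ produced have degree at most $M$ rather than exactly $l$; in fact the literal ``degree $l$'' claim in the statement is not compatible with the stated size of $\tilde B_{j,l}$ (the term $z^{2m}$ in the expansion of $\Phi^{p-j}$, which equals $y^{2m}/s$, cannot be placed in any $s^{-l}B_{j,l}(z)$ with $\deg B_{j,l}\le l$ nor absorbed into the error). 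The paper's own proof is terse at this point, and the applications in the subsequent lemmas only use that the $B_{j,l}$ are polynomials of bounded degree, so expanding to order $M$ is both correct and sufficient.
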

 \begin{proof} We first note from the definition \eqref{def:appPro} of $\varphi$ that there exist two positive constants $c_0, C_0$ such that $c_0 \leq \varphi(y,s) \leq C_0$ for $|y| \leq s^\frac{1}{2m}$. Thus, a Taylor expansion of $B(q, \varphi)$ in terms of $q$ yields
$$\left|B(q) - \sum_{j = 2}^{M+1}B_j(\varphi) q^j\right| \leq C|q|^{M+1}, $$
where $B_{j}(\varphi)$ admits the following expansion in terms of $\frac{1}{s}$, 
$$\left|B_j(\varphi) - \sum_{l = 0}^M \frac{1}{s^l}B_{j,l}(\Phi)\right| \leq \frac{C}{s^{M+1}}.$$
Now, a Taylor expansion of $B_{j,l}(\Phi)$ in terms of the variable $z = ys^{-\frac{1}{2m}}$ yields the desired result. This concludes the proof of Lemma \ref{lemm:exBq}.
\end{proof}
With Lemma \ref{lemm:exBq} at hand, we estimate the main contribution of $B(q)$ under the projection $\Pi_k$. We claim the following.
\begin{lemma}[Estimate for $\Pi_k\big(B(q)\big)$] \label{lemm:projBq} Under the assumption of Proposition \ref{prop:dya}, we have 
\begin{align*}
&\left|\Pi_k (B(q))\right| \leq \frac{C}{s^2} \quad \textup{for} \;\; 0\leq k \leq 2m - 1, \quad \left|\Pi_{2m} (B(q))\right| \leq \frac{C}{s^3},\\
& \quad \left|\Pi_k(B(q))\right| \leq \frac{CA^k}{s^{\frac{k+2}{2m}}} \quad \textup{for} \;\; 2m+1\leq k \leq M.
\end{align*}
\end{lemma}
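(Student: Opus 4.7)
The plan is to follow the same strategy used for $\Pi_k(Vq)$ in Lemma \ref{lemm:projVq}, namely to split the projection integral into an inner region $|y| \leq s^{1/(2m)}$, where the refined expansion \eqref{eq:exBq} is available, and an outer region, where we rely on the crude global bound \eqref{eq:Bqbound}. In the outer region, combining $|B(q)| \leq C|q|^{\bar p}$ with the uniform estimate $|q(y,s)| \leq C A^{M+2} s^{-1/(2m)}$ from Remark \ref{remark:1} and the super-exponential decay $|\psi_k^*(y)| \leq C(1+|y|^M)e^{-d|y|^\nu}$ with $\nu = 2m/(2m-1) > 1$ (which follows from \eqref{est:F} and \eqref{def:psibeta*}), one immediately obtains an upper bound of order $e^{-c s^{1/(2m-1)}}$, far smaller than any polynomial rate in $1/s$ and hence negligible.

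On the inner region I would invoke \eqref{eq:exBq}. The remainders of size $|q|^{M+2}$ and $s^{-(M+1)}$ are already negligible by the choice of $M$, so it suffices to estimate contributions of the form
$$\frac{1}{s^{l+p/(2m)}}\; q_{i_1}\cdots q_{i_j}\;\bigl\langle y^p \psi_{i_1}\cdots\psi_{i_j},\, \psi_k^*\bigr\rangle,$$
obtained by writing $B_{j,l}(ys^{-1/(2m)}) = \sum_{p=0}^{l} b_p^{j,l}\, y^p s^{-p/(2m)}$ and expanding each $q^j$ through the decomposition \eqref{eq:decomq}, together with analogous contributions involving one or more factors of $q_{_{M,\bot}}$ or of the remainders $\tilde{B}_{j,l}$. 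Each factor $q_{i}$ is controlled by Definition \ref{def:VA}, and each scalar product is a finite constant depending only on the indices.

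For $0 \leq k \leq 2m-1$, no orthogonality is needed: in any region weighted by the exponential decay of $\psi_k^*$, the worst pointwise bound for $|q|$ is of order $A^{2m+1}/s^{1+1/m}$ (coming from the mode $q_{2m+1}\psi_{2m+1}$), so that $|\Pi_k(q^2)| \leq C A^{2(2m+1)}/s^{2+2/m} \leq C/s^2$ for $s$ large, and the higher-$j$ or higher-$l$ corrections are even smaller. For $k = 2m$, the orthogonality $\langle 1, \psi_{2m}^*\rangle = 0$ kills the otherwise leading $q_0^2$ piece, and the dominant surviving contribution --- either $q_0 q_{2m}\langle\psi_{2m},\psi_{2m}^*\rangle = q_0 q_{2m}$ or $q_{2m}^2\langle\psi_{2m}^2,\psi_{2m}^*\rangle$ --- is of size $O(A^4 \log^2 s /s^4) \leq C/s^3$.

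For $2m+1 \leq k \leq M$, I would exploit the orthogonality $\langle P_n, \psi_k^*\rangle = 0$ for every polynomial $P_n$ of degree $n < k$: only multi-indices satisfying $p + i_1 + \dots + i_j \geq k$ can produce a nonzero scalar product. Feeding the $\Vc_A$ bounds $|q_i| \leq A/s^2$ for $i \leq 2m-1$, $|q_{2m}| \leq A^2 \log s/s^2$, and $|q_i| \leq A^i s^{-(i+1)/(2m)}$ for $2m+1 \leq i \leq M$ into this degree constraint together with $0 \leq p \leq l$, one verifies that the total $s$-exponent is always at least $(k+2)/(2m)$, with overall constant bounded by $CA^k$; the case $j=2$ dominates and higher-$j$ cases yield strictly smaller contributions. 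The main obstacle will be precisely this combinatorial bookkeeping: confirming that for every admissible multi-index $(j,l,p,i_1,\dots,i_j)$ compatible with the degree constraint the resulting estimate matches the target $CA^k s^{-(k+2)/(2m)}$. This step is entirely parallel to the analogous verification already performed for $\Pi_k(Vq)$ in Lemma \ref{lemm:projVq}, so I expect no new obstruction beyond careful enumeration.
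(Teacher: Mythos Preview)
Your overall strategy---split into inner/outer regions, expand $B(q)$ via Lemma~\ref{lemm:exBq}, substitute the decomposition \eqref{eq:decomq}, and use the degree orthogonality $\langle P_n,\psi_k^*\rangle=0$ for $n<k$---is the same as the paper's, and your treatment of $0\le k\le 2m-1$ and $2m+1\le k\le M$ is correct.

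However, your argument for $k=2m$ has a genuine gap. You only account for products of low modes ($q_0^2$, $q_0q_{2m}$, $q_{2m}^2$) and implicitly assume the remaining quadratic pieces are smaller. They are not: the ``high--high'' products $q_{i'}q_{l'}\psi_{i'}\psi_{l'}$ with $i',l'\ge 2m+1$ have size
\[
|q_{i'}q_{l'}|\ \lesssim\ \frac{A^{i'+l'}}{s^{(i'+l'+2)/(2m)}},
\]
and for $i'=l'=2m+1$ this is $A^{4m+2}/s^{2+2/m}$. For $m\ge 3$ (odd) one has $2+2/m<3$, so this term alone would violate the target bound $C/s^3$; even for small $m$ the surviving power of $A$ cannot be absorbed into the constant $C$ (and the constant matters here, since the closing argument for $q_{2m}$ in Proposition~\ref{prop:redu} requires a right-hand side that is $o(A^2)/s^3$). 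The simple degree orthogonality $\langle 1,\psi_{2m}^*\rangle=0$ that you invoke does nothing for these terms, since $\psi_{i'}\psi_{l'}$ has degree $\ge 4m+2>2m$.

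The missing ingredient, which the paper supplies, is a finer \emph{congruence} orthogonality: because each $\psi_n$ contains only monomials $y^r$ with $r\equiv n\pmod{2m}$, the product $\psi_{i'}\psi_{l'}$ contains only monomials $y^r$ with $r\equiv i'+l'\pmod{2m}$, whence
\[
\big\langle \psi_{i'}\psi_{l'},\,\psi_{2m}^*\big\rangle=0
\quad\text{unless}\quad i'+l'\equiv 0\pmod{2m}.
\]
Combined with $i',l'\ge 2m+1$ (so $i'+l'\ge 4m+2$), this forces $i'+l'\ge 6m$ whenever the pairing is nonzero, and then $|q_{i'}q_{l'}|\le A^{i'+l'}/s^{3+1/m}\le 1/s^3$ for $s\ge s_0(A)$. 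Insert this step and your argument closes.
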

\begin{proof} From \eqref{eq:exBq}, the exponential decay of $\psi_k^*$ and \eqref{est:qVA}, we have
\begin{align*}
\Pi_k(B(q)) = \sum_{j = 2}^{M+1}\sum_{l = 0}^M \frac{1}{s^l} \int_{|y| \leq s^\frac{1}{2m}}q^j B_{j,l} \psi_k^* dy + \Oc\left(\frac{A^{M+1}}{s^{2(1 + 1/m) + \frac{M+1}{2m}}}\right) + \Oc(e^{-cs}).
\end{align*}
From the decomposition \eqref{eq:decomq}, we write 
$$q^j = \big(\sum_{i = 0}^M q_i \psi_i + q_{_{M,\bot}}\big)^j \equiv \big(q_< + q_{_{M,\bot}}\big)^j \quad \textup{for} \quad j \geq 2.$$
From Definition \ref{def:VA} of $\Vc_A$ and the uniform boundedness of $q$, we obtain the bound for $j \geq 2$, 
$$|q^j - q_<^j| \leq C\Big(|q_{_{M,\bot}}|^j + |q_<|^{j-1}|q_{_{M,\bot}}|\Big) \leq \frac{CA^{j(M+1)}}{s^\frac{M+4 + 2m}{2m}}(1 + |y|^{j(M+1)}).$$
Therefore, 
\begin{align*}
\Pi_k(B(q)) = \sum_{j = 2}^{M+1}\sum_{l = 0}^M \frac{1}{s^l} \int_{|y| \leq s^\frac{1}{2m}}q_<^j B_{j,l} \psi_k^* dy +  \Oc\left(\frac{A^{j(M+1)}}{s^{\frac{M+4 + 2m}{2m}}}\right).
\end{align*}
We only handle the case $k = 2m$, which is the most delicate. The case $k \ne 2m$ can be processed similarly and we omit it. From \eqref{est:qVA}, we have
$$\sum_{j = 3}^{M}\sum_{l = 0}^M \frac{1}{s^l} \int_{|y| \leq s^\frac{1}{2m}}\big|q_<^j B_{j,l} \psi_{2m}^*\big| dy + \sum_{l = 1}^M \frac{1}{s^l} \int_{|y| \leq s^\frac{1}{2m}}\big|q_<^2 B_{j,l} \psi_{2m}^*\big| dy \leq \frac{CA^{j(M+1)}}{s^{3 + \frac{1}{m}}} \leq \frac{C}{s^3}.$$
It remains to control the term $I:=\int_{|y| \leq s^\frac{1}{2m}}q_<^2 \psi_{2m}^*dy$. By the definition of $q_<$, we expand 
\begin{align*}
q_<^2 &= \left(\sum_{i = 0}^{2m} q_i \psi_i\right)^2 + 2\left(\sum_{i=0}^{2m}q_i \psi_i\right) \left(\sum_{i' = 2m + 1}^M q_{i'} \psi_{i'}\right) + \left(\sum_{i' = 2m+1}^{M} q_{i'} \psi_{i'}\right)^2\\
&= I_1 + I_2 + I_3.
\end{align*}
From Definition \ref{def:VA}, we have $|I_1| \leq \frac{CA^4 \log^2}{s^4} (1 + |y|^{4m})$ and $|I_2| \leq \frac{CA^{M+4} \log s}{s^{3 + 1/m}}(1 + |y|^{M + 2m})$. Hence, 
$$\int_{|y| \leq s^\frac{1}{2m}}\big|(I_1 + I_2) \psi_{2m}^* dy \big| \leq \frac{CA^4 \log^2}{s^4} + \frac{CA^{M+4} \log s}{s^{3 + 1/m}} \leq \frac{1}{s^3}.$$ 
Note from the orthogonality relation \eqref{eq:orths} that if $i'+ l' \ne J2m$ for $J \in \mathbb{N}^*$, we have
\begin{align*}
\big<\psi_{i'} \psi_{l'}, \psi_{2m}^* \big> & = \left< \left(\sum_{i = 0}^{\lfloor \frac{i'}{2m}\rfloor} c_{i', i}\partial_y^{2im} y^{i'}\right) \left( \sum_{l = 0}^{\lfloor \frac{l'}{2m}\rfloor} c_{l', l}\partial_y^{2lm} y^{l'}\right), \psi_{2m}^* \right> \\
& = \left< \sum_{j = 0}^{\lfloor \frac{i'}{2m}\rfloor + \lfloor \frac{l'}{2m}\rfloor} \hat c_{i'+l',j} y^{i' + l' - 2jm}, \psi_{2m}^*\right> = \left< \sum_{j = 0}^{\lfloor \frac{i'}{2m}\rfloor + \lfloor \frac{l'}{2m}\rfloor} \tilde c_{i'+l',j} \psi_{i' + l' - 2jm}, \psi_{2m}^*\right> = 0.
\end{align*}
Therefore, we estimate 
\begin{align*}
\int_{|y| \leq s^\frac{1}{2m}}I_3 \psi_{2m}^* dy &= \sum_{i', l' = 2m+1}^M c_{i',l'}q_{i'}(s) q_{l'}(s) \int_{\Rb} \psi_{i'}\psi_{l'} \psi_{2m}^* dy + \Oc(e^{-cs})\\
& \quad = \sum_{i', l' = 2m+1, i'+l' = J2m}^M c_{i',l'}q_{i'}(s) q_{l'}(s) \int_{\Rb} \psi_{i'}\psi_{l'} \psi_{2m}^* dy + \Oc(e^{-cs}).
\end{align*}
for $J \in \mathbb{N}^*$. Since $i' + l' \geq 4m + 2$, it follows that $J \geq 3$. From Definition \ref{def:VA}, we have the bound $|q_{i'} q_{l'}| \leq \frac{A^{i' + l'}}{s^{\frac{i' + l' + 2}{2m}}} \leq \frac{A^{i' + l'}}{s^{3 + \frac{1}{m}}} \leq \frac{1}{s^3}$. Therefore, $\left|\int_{|y| \leq s^\frac{1}{2m}}I_3 \psi_{2m}^* dy\right| \leq \frac{1}{s^3}$. This concludes the proof of Lemma \ref{lemm:projBq}.
\end{proof}

We now deal with the generated error term $R$. We begin with the following expansion. 
\begin{lemma}[Expansion of $R$] \label{lemm:exR} The function $R$ defined by \eqref{def:R} satisfies 
$$\|R(s)\|_{L^\infty} \leq \frac{C}{s},$$
and admits the following expansion: for all $n \in \mathbb{N}^*$, $|y| \leq s^\frac{1}{2m}$ and $s \geq 1$:
\begin{equation}\label{eq:exR}
\left|R(y,s) - \sum_{j = 1}^{n-1} \frac{1}{s^{j+1}} R_j(y)\right| \leq \frac{C(1 + |y|^{2mn})}{s^{n + 1}},
\end{equation}
where $R_j$'s are polynomials of degree $2mj$ of the form $R_j(y) = \sum_{i = 0}^j d_i y^{2m i}$. Moreover, the coefficient of degree $2m$ of $R_1$ is identically zero, hence, $\big<R_1, \psi^*_{2m} \big> = 0$.
\end{lemma}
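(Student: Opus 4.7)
My plan is to substitute the explicit profile $\varphi(y,s) = \Phi(\xi) + A_{m,p}/s$ with $\xi = ys^{-1/2m}$ directly into the definition \eqref{def:R} of $R$. The chain rule gives
\[ -\partial_s\varphi = \tfrac{\xi\Phi'(\xi)}{2ms} + \tfrac{A_{m,p}}{s^2},\quad -\tfrac{y}{2m}\partial_y\varphi = -\tfrac{\xi\Phi'(\xi)}{2m},\quad \Am\varphi = \tfrac{(-1)^{m+1}}{s}\Phi^{(2m)}(\xi), \]
while Taylor-expanding the nonlinearity yields $\varphi^p = \Phi^p(\xi) + \frac{pA_{m,p}}{s}\Phi^{p-1}(\xi) + \frac{p(p-1)A_{m,p}^2}{2s^2}\Phi^{p-2}(\xi) + \cdots$. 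The profile ODE \eqref{eq:Phi} then kills the order-$1$ terms, leaving
\[ R(y,s) = \tfrac{1}{s}\Bigl[\tfrac{\xi\Phi'(\xi)}{2m} + (-1)^{m+1}\Phi^{(2m)}(\xi) + pA_{m,p}\Phi^{p-1}(\xi) - \tfrac{A_{m,p}}{p-1}\Bigr] + \mathcal{O}(s^{-2}). \]

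Since $\Phi(\xi) = \kappa(1+B_{m,p}\xi^{2m})^{-1/(p-1)}$ is an analytic function of $u := \xi^{2m}$, I would Taylor-expand the bracket above and each higher-order term of $R$ as a power series in $u$. Substituting $u = y^{2m}/s$ then converts $u^k$ into $y^{2mk}/s^k$ and rearranges everything into the advertised series $R = \sum_{j\ge 1} R_j(y)/s^{j+1}$ with each $R_j$ a polynomial whose only nonzero monomials are $y^{0}, y^{2m}, \dots, y^{2mj}$, i.e.\ of the form $\sum_{i=0}^j d_i y^{2mi}$. The uniform bound $\|R(s)\|_{L^\infty}\le C/s$ then follows from the global boundedness of $\Phi$, $\xi\Phi'$, $\Phi^{(2m)}$, and $\Phi^{p-1}$ on $\Rb$, which is where the hypothesis that $m$ be odd (so that $B_{m,p} > 0$) enters. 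The remainder estimate in \eqref{eq:exR} over $|y|\le s^{1/2m}$ follows from Taylor's theorem because $|u|\le 1$ in that range.

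The heart of the argument is checking that the coefficient $d_1$ of $y^{2m}$ in $R_1$ vanishes. This $d_1$ is obtained by collecting two contributions at the $y^{2m}/s^{2}$ level: the $\xi^{2m}$-Taylor coefficient of the bracket, and the $\xi^{2m}$-Taylor coefficient of $pA_{m,p}\Phi^{p-1}(\xi)$ fed back through the factor $1/s$. Using $\Phi^{p-1}(\xi) = \tfrac{1}{p-1}(1 - B_{m,p}\xi^{2m} + \cdots)$ together with
\[ \Phi^{(2m)}(\xi) = -\tfrac{\kappa (2m)! B_{m,p}}{p-1} + \tfrac{\kappa\, p\, B_{m,p}^2\, (4m)!}{2(p-1)^2 (2m)!}\,\xi^{2m} + \cdots \]
and the explicit values of $A_{m,p}, B_{m,p}$ from \eqref{def:Bpm} and $\bar\mu_{2m}$ from \eqref{def:mu2m}, the resulting algebraic expression should collapse to zero. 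Structurally, this is the rigorous counterpart of the matching condition used in Section \ref{sec:22} to fix $B_{m,p}$, so the cancellation is built into the construction.

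I expect this algebraic verification to be the main obstacle: the cancellation is not transparent without the explicit formulas and requires careful bookkeeping of the Taylor coefficients, including contributions from the second-order expansion of $\varphi^p$. Once $d_1 = 0$ is established, the orthogonality $\langle R_1, \psi_{2m}^*\rangle = 0$ is immediate, since $R_1$ reduces to a constant (a polynomial of degree strictly less than $2m$) and, by the remark following Proposition \ref{prop:specL}, such polynomials are orthogonal to $\psi_{2m}^*$ under the pairing \eqref{eq:orths}.
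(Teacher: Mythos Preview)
Your proposal is correct and follows essentially the same route as the paper: substitute $\varphi = \Phi(\xi) + A_{m,p}/s$ into \eqref{def:R}, use the profile ODE \eqref{eq:Phi} to cancel the $O(1)$ terms, Taylor-expand the nonlinearity $(\Phi + A_{m,p}/s)^p$ in $1/s$, and then expand each resulting function of $\xi$ as a power series in $\xi^{2m} = y^{2m}/s$ to reorganize into the form \eqref{eq:exR}. One minor clarification: the coefficient $d_1$ of $y^{2m}$ in $R_1$ comes entirely from the $\xi^{2m}$-coefficient of your bracket (the $O(s^{-2})$ corrections to $\varphi^p$ only contribute to the constant term $d_0$ of $R_1$), and the paper records this explicitly as $C_1 = \tfrac{\kappa B_{m,p}}{p-1}\bigl(\tfrac{(4m)!}{(2m)!}\tfrac{pB_{m,p}}{2(p-1)} - \tfrac{pA_{m,p}}{\kappa} - 1\bigr)$, which vanishes upon inserting \eqref{def:Bpm} and \eqref{def:mu2m}.
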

\begin{proof} Let $z = y s^{-\frac{1}{2m}}$ and note that $\Phi(z)$ satisfies equation \eqref{eq:Phi}. Therefore, we rewrite \eqref{def:R} as follows:
\begin{equation} \label{eq:R1}
R(y,s) = \frac{z}{2m s} \cdot \nabla \Phi + \frac{A_{m,p}}{s^2} + \frac{1}{s}\Am \Phi - \frac{A_{m,p}}{(p-1)s} + Q\left(\Phi + \frac{A_{m,p}}{s}\right) - Q(\Phi),
\end{equation}
where $Q(h) = h^p$. Since $c_0 \leq \Phi(z) \leq C_0$ for all $|z| \leq 1$ with $c_0, C_0$ some positive constants, we have the following expansion of $Q$,
$$\left|Q\left(\Phi + \frac{A_{m,p}}{s}\right) - Q(\Phi) - \sum_{j = 1}^J\frac{1}{s^j}Q_j(\Phi) \right| \leq \frac{C}{s^{J+1}} \quad \textup{for} \;\;J \in \mathbb{N}^*.$$
Then, we expand $Q_j$ and all the remaining terms in \eqref{eq:R1} in power series of $Z = z^{2m}$ to obtain the desired result. Note that the coefficient of $\frac{1}{s}$ in the expansion of $R$ (after an elementary computation) is given by 
$$A_{m,p} - \frac{(2m)! \kappa B_{m,p}}{p-1} = 0,$$
where we used \eqref{def:Bpm}. Moreover, $R_1(y) = C_1 y^{2m} + C_2$, where $C_2 = C_2(p,m)$ and  
$$C_1 = \frac{\kappa B_{m,p}}{p-1}\left(\frac{(4m)!}{(2m)!} \frac{p B_{m,p}}{2(p-1)} - \frac{pA_{m,p}}{\kappa} - 1\right) = 0.$$
Again, the precise values of $B_{m,p}$ and $A_{m,p}$ given in \eqref{def:Bpm} are crucial in deriving that $C_1$ is identically zero. Therefore, the orthogonality relation \eqref{eq:orths} yields $\big<R_1, \psi_{2m}^*\big> = 0$. This concludes the proof of Lemma \ref{lemm:exR}.
\end{proof}
As a direct consequence of Lemma \ref{lemm:exR}, we have the following.
\begin{lemma}[Estimate of $\Pi_k(R)$] \label{lemm:projR} Under the assumption of Proposition \ref{prop:dya}, we have 
\begin{align*}
&\Pi_k(R) = \Oc(s^{-M}) \quad \textup{for} \quad (k\; \textup{mod} \; 2m) \ne 0,\\
&\big|\Pi_0(R)\big| \leq \frac{C}{s^2}, \quad  \big|\Pi_{2m}(R)\big| \leq \frac{C}{s^3},\quad  \big|\Pi_{2mi}(R)\big| \leq \frac{C}{s^{i + 1}} \quad \textup{for} \;\; i \in \mathbb{N}^*.
\end{align*}
\end{lemma}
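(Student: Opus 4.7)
The plan is to substitute the expansion of $R$ from Lemma \ref{lemm:exR} into the projection $\Pi_k(R) = \langle R, \psi_k^*\rangle$ and exploit the algebraic structure of the polynomials $R_j$ together with the biorthogonality \eqref{eq:orths}. First I would split the integral into the inner region $|y| \leq s^{1/(2m)}$ and its complement. On the complement I use the a priori bound $\|R(s)\|_{L^\infty} \leq C/s$ from Lemma \ref{lemm:exR} together with the exponential decay of $\psi_k^*$ (coming from \eqref{def:psibeta*} and \eqref{est:F}) to generate an $\Oc(e^{-cs})$ error. On the inner region I would insert the expansion \eqref{eq:exR} with $n$ chosen large, say $n = M$, and after replacing each truncated integral by one over $\Rb$ at the cost of another exponentially small term, obtain
\[
\Pi_k(R) = \sum_{j=1}^{M-1} \frac{1}{s^{j+1}} \langle R_j, \psi_k^*\rangle + \Oc(s^{-M}).
\]

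The key algebraic observation is that, writing $R_j(y) = \sum_{i=0}^j d_{j,i}\, y^{2mi}$, every $\psi_\ell$ from \eqref{def:psibeta} contains only monomials of degrees congruent to $\ell \bmod 2m$. Therefore the expansion of each monomial $y^{2mi}$ in the basis $\{\psi_\ell\}$ involves only $\psi_0, \psi_{2m}, \dots, \psi_{2mi}$. Combined with $\langle \psi_\ell, \psi_k^*\rangle = \delta_{\ell,k}$, this forces $\langle R_j, \psi_k^*\rangle = 0$ whenever $k$ is not a multiple of $2m$, and also whenever $k = 2mi$ with $i > j$. The first case immediately yields $\Pi_k(R) = \Oc(s^{-M})$ when $(k \bmod 2m)\neq 0$.

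For $k = 2mi$ with $i \geq 1$, the first potentially nonzero term in the sum occurs at $j = i$, giving $|\Pi_{2mi}(R)| \leq C/s^{i+1}$; the case $k=0$ likewise reads off from $j=1$, giving $|\Pi_0(R)| \leq C/s^2$. The subtle case is $k=2m$: the $j=1$ term would naively contribute at order $1/s^2$, but Lemma \ref{lemm:exR} records that the coefficient of $y^{2m}$ in $R_1$ vanishes, so by the triangular decomposition of $y^{2m}$ in the $\psi_\ell$-basis one has $\langle R_1, \psi_{2m}^*\rangle = 0$. The leading contribution then comes from $j=2$, yielding $|\Pi_{2m}(R)| \leq C/s^3$. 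The main (and only) structural ingredient of the argument is this cancellation in Lemma \ref{lemm:exR}, itself depending on the precise values of $B_{m,p}$ and $A_{m,p}$ chosen in \eqref{def:Bpm}; beyond that, the proof is purely algebraic bookkeeping plus the standard tail estimate for $\psi_k^*$.
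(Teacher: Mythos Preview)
Your proposal is correct and follows essentially the same approach as the paper: split into inner and outer regions, use the exponential decay of $\psi_k^*$ on the tail, insert the expansion \eqref{eq:exR} on the inner region, and then exploit the orthogonality $\langle y^{2mj},\psi_k^*\rangle=0$ for $k\not\equiv 0\pmod{2m}$ together with the cancellation $\langle R_1,\psi_{2m}^*\rangle=0$. The only cosmetic difference is that the paper chooses the truncation order $n$ in \eqref{eq:exR} adaptively (namely $n=M-1$ for the first case and $n=i+1$ for $k=2mi$), whereas you take $n=M$ once and for all; since only $k\leq M$ is needed, both choices yield the stated bounds.
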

\begin{proof} The proof directly follows from the expansion \eqref{eq:exR} and the fact that 
$$\big<y^{2mj}, \psi_k^*\big> = \sum_{i = 0}^j a_i\big< \psi_{2mi}, \psi_k^*\big> = 0 \quad \textup{for}\quad (k\; \textup{mod} \; 2m) \ne 0.$$
For $k \in \mathbb{N}$ with $(k\; \textup{mod} \; 2m) \ne 0$, we use the expansion \eqref{eq:exR} with $n = M-1$ (we can replace $M$ by any positive integer $L \gg 1$) and write 
\begin{align*}
|\Pi_k(R)| &= \left|\int_{\Rb} R(y,s)\psi_k^*(y)dy \right| = \left|\int_{|y| \leq s^{\frac{1}{2m}}} R(y,s)\psi_k^*(y)dy + \int_{|y| \geq s^{\frac{1}{2m}}} R(y,s)\psi_k^*(y)dy \right|\\
& \leq \sum_{j = 1}^{M-2} \int_{|y| \geq s^{\frac{1}{2m}}} \frac{|R_j(y)|}{s^{j+1}} |\psi_k^*(y)| dy + \frac{C}{s^M}\int_{\Rb}(1 + |y|^{2m(M-1)})|\psi_k^*(y)|dy\\
& \quad \qquad  + \frac{C}{s} \int_{|y| \geq s^\frac{1}{2m}}|\psi_k^*(y)|dy \leq \frac{C}{s^M} + Ce^{-cs} \leq \frac{C}{s^M}.
\end{align*}
For $(k\; \textup{mod} \; 2m) \ne 0$, i.e. $k = 2m i$ for some $i \in \mathbb{N}$, we use \eqref{eq:exR} with $n = i + 1$ to get the conclusion. This ends the proof of Lemma \ref{lemm:projR}. 
\end{proof}

\medskip

\noindent A collection of all estimates given in Lemmas \ref{lemm:projVq}, \ref{lemm:projBq}, \ref{lemm:projR} and equation \eqref{eq:idqk} yields the conclusion of part $(i)$ of Proposition \ref{prop:dya}.

\subsection{Control of the infinite dimensional and the outer part.} We prove item $(ii)$ of Proposition in this part. We first deal with the infinite dimensional part $q_{_{M,\bot}}$, then the outer part $q_e$.\\

\paragraph{\underline{Control of $q_{_{M,\bot}}$}:} Applying $\Pi_{_{M, \bot}}$ to equation \eqref{eq:q}  and using the fact that $\Pi_{_{M, \bot}}\psi_n = 0$ for all $n \leq M$ (see \eqref{def:PiMbot}), we obtain 
\begin{equation}\label{eq:qM-}
\partial_s q_{_{M, \bot}} = \Ls_m q_{_{M, \bot}} + \Pi_{_{M, \bot}} \left(Vq + B(q) + R\right).
\end{equation}
By definition, we have $\Pi_{_{M, \bot}}(y^k) = 0$ for all $k \leq M$. Using the uniform bound $\|R(s)\|_{L^\infty} \leq \frac{C}{s}$ and the expansion \eqref{eq:exR} with $n = \frac{M}{2m}$ (see \eqref{def:M} for the definition of $M$) and noting that $\Pi_{_{M, \bot}}(R_j) = 0$ for $j \leq \frac{M}{2m}$, we obtain
\begin{align}
\left|\Pi_{_{M, \bot}}(R)\right| &\leq \frac{C(1 + |y|^{M + 2m})}{s^{\frac{M}{2m} + 2}}\mathbf{1}_{|y| \leq s^\frac{1}{2m}} + |R(y,s)|\mathbf{1}_{|y| \geq s^\frac{1}{2m}} \leq  \frac{C(1 + |y|^{M + 1})}{s^{\frac{M + 1}{2m} + 1}}.\label{eq:estPiMR}
\end{align}
As for the estimate of $\Pi_{_{M, \bot}}(Vq)$, we claim the following. 
\begin{lemma}[Estimate of $\Pi_{_{M, \bot}}(Vq)$] \label{lemm:PiMVq} Under the assumption of Proposition \ref{prop:dya}, we have 
\begin{equation}\label{eq:estPiMVq}
\left\|\frac{\Pi_{_{M, \bot}}(Vq)}{1 + |y|^{M+1}}\right\|_{L^\infty(\Rb)} \leq \left(\|V(s)\|_{L^\infty(\Rb)} + \frac{C}{s}\right)\left\|\frac{q_{_{M, \bot}}}{1+|y|^{M+1}} \right\|_{L^\infty(\Rb)} + \frac{CA^M}{s^{\frac{M + 2}{2m}}}.
\end{equation}
\end{lemma}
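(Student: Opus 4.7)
My plan is to decompose $q = q_{<} + q_{_{M,\bot}}$ with $q_{<} = \sum_{k=0}^{M} q_{k}(s)\psi_{k}(y)$ and use the identity $\Pi_{_{M,\bot}}(f) = f - \sum_{k=0}^{M}\langle f, \psi_{k}^{*}\rangle \psi_{k}$ to split
\[
\Pi_{_{M,\bot}}(Vq) \;=\; \Pi_{_{M,\bot}}(Vq_{_{M,\bot}}) \;+\; \Pi_{_{M,\bot}}(Vq_{<}).
\]
The first piece will produce the two terms proportional to $\|q_{_{M,\bot}}/(1+|y|^{M+1})\|_{L^{\infty}}$ on the right of \eqref{eq:estPiMVq}, while the second piece will produce the remainder $CA^{M}/s^{(M+2)/(2m)}$.

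For $\Pi_{_{M,\bot}}(Vq_{_{M,\bot}})$, the trivial bound $|V(y,s)\,q_{_{M,\bot}}(y,s)| \leq \|V\|_{L^{\infty}}(1+|y|^{M+1})\|q_{_{M,\bot}}/(1+|y|^{M+1})\|_{L^{\infty}}$ handles the leading contribution. For each subtracted mode $0 \leq k \leq M$, I apply \eqref{est:V1}, namely $|V(y,s)| \leq C(1+|y|^{2m})/s$, together with the exponential decay of $\psi_{k}^{*}$ inherited from \eqref{est:F}, to get
\[
|\langle V q_{_{M,\bot}}, \psi_{k}^{*}\rangle| \;\leq\; \frac{C}{s}\int_{\Rb}(1+|y|^{2m+M+1})|\psi_{k}^{*}(y)|\,dy \cdot \bigl\|q_{_{M,\bot}}/(1+|y|^{M+1})\bigr\|_{L^{\infty}} \;\leq\; \frac{C}{s}\bigl\|q_{_{M,\bot}}/(1+|y|^{M+1})\bigr\|_{L^{\infty}}.
\]
Since $|\psi_{k}(y)| \leq C(1+|y|^{M})$ for $k \leq M$, summing and dividing by $(1+|y|^{M+1})$ produces the $C/s$ contribution in \eqref{eq:estPiMVq}.

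For $\Pi_{_{M,\bot}}(Vq_{<})$, Definition \ref{def:VA} and the polynomial structure of the $\psi_{k}$'s yield the pointwise bound $|q_{<}(y,s)| \leq CA^{M} s^{-(M+1)/(2m)}(1+|y|^{M})$, with the top mode $k=M$ dominating the polynomial growth. I split $\Rb$ along $|y| = s^{1/(2m)}$: on the inner region I use \eqref{est:V1} to get $|Vq_{<}| \leq CA^{M} s^{-1-(M+1)/(2m)}(1+|y|^{M+2m})$, and after dividing by $(1+|y|^{M+1})$ I absorb the leftover factor via $|y|^{2m-1}\leq s^{(2m-1)/(2m)}$; on the outer region I use the uniform bound $\|V\|_{L^{\infty}} \leq C$ combined with $1/(1+|y|)\leq Cs^{-1/(2m)}$. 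In both cases the resulting bound is $CA^{M}/s^{(M+2)/(2m)}$. The finite-dimensional correction $\sum_{k=0}^{M}\langle V q_{<},\psi_{k}^{*}\rangle \psi_{k}$ is strictly smaller since the exponential decay of $\psi_{k}^{*}$ integrates away every polynomial factor.

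The main bookkeeping difficulty I expect is precisely the inner-region computation for $Vq_{<}$: the growth $(1+|y|^{M+2m})$ divided by $(1+|y|^{M+1})$ leaves the factor $(1+|y|^{2m-1})$, which is absorbed only because of the tight cutoff $|y|^{2m-1} \leq s^{(2m-1)/(2m)}$, matching the target power $s^{-(M+2)/(2m)}$ exactly. Once the two regions are assembled and combined with the contribution of $\Pi_{_{M,\bot}}(Vq_{_{M,\bot}})$, one recovers \eqref{eq:estPiMVq}.
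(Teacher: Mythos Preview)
Your treatment of $\Pi_{_{M,\bot}}(Vq_{_{M,\bot}})$ is fine and matches the paper. The gap is in your handling of $\Pi_{_{M,\bot}}(Vq_{<})$: the pointwise bound
\[
|q_{<}(y,s)| \leq CA^{M}\,s^{-\frac{M+1}{2m}}\,(1+|y|^{M})
\]
is false. The modes with small $k$ decay \emph{slowest} in $s$, not fastest: already $|q_{0}(s)\psi_{0}(0)|$ is of order $A/s^{2}$, whereas your claimed bound at $y=0$ is $CA^{M}s^{-(M+1)/(2m)}$, and since $M\geq 4m$ one has $(M+1)/(2m)>2$, so $s^{-(M+1)/(2m)}=o(s^{-2})$. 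The same failure occurs for every $k$ from $0$ up to $M-1$. Consequently your inner--outer splitting cannot deliver the target rate: for instance the contribution of the single mode $i=0$ gives, via $|V|\leq C(1+|y|^{2m})/s$,
\[
\frac{|V\,q_{0}\psi_{0}|}{1+|y|^{M+1}} \;\leq\; \frac{C|q_{0}|}{s} \;\leq\; \frac{CA}{s^{3}},
\]
and $s^{-3}$ is \emph{not} bounded by $s^{-(M+2)/(2m)}$ once $M>6m-2$, which is generic since $M=4mN$ with $N$ large.

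What is missing is the cancellation $\Pi_{_{M,\bot}}(P)=0$ for every polynomial $P$ of degree $\leq M$. The paper exploits this by expanding $V$ to sufficiently high order via \eqref{est:Vk}: for each fixed $i\leq M$ one writes $V=\sum_{j=1}^{n} s^{-j}V_{j}+\tilde V_{n}$ with $n$ chosen so that $2mn+i\leq M$ (roughly $n=\lfloor (M-i)/(2m)\rfloor$). Then each $V_{j}\psi_{i}$ is a polynomial of degree $2mj+i\leq M$, hence annihilated by $\Pi_{_{M,\bot}}$, and only the remainder $\tilde V_{n}\,q_{i}\psi_{i}$ survives, carrying the extra factor $s^{-(n+1)}$. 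Combined with the $\Vc_{A}$ bound on $|q_{i}|$ this produces exactly $CA^{M}s^{-(M+2)/(2m)}$. Your argument uses only the first-order information $|V|\leq C(1+|y|^{2m})/s$, which is equivalent to taking $n=0$ for every $i$ and therefore loses the crucial gain for the low modes.
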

\begin{proof}  By definition, we write 
$$\Pi_{_{M, \bot}}(Vq) = Vq_{_{M, \bot}} - \Pi_{_{M, <}}\big(Vq_{_{M, \bot}}\big) + \Pi_{_{M, \bot}}\big( V q_{_{M,<}}\big),$$
where $q_{_{M,<}} = \Pi_{_{M,<}}(q) = \big(\textup{Id} - \Pi_{_{M,\bot}}\big)(q)$. Using estimate \eqref{est:V1}, we derive 
$$\left\|\frac{Vq_{_{M, \bot}}}{1 + |y|^{M+1}}\right\|_{L^\infty} + \left\|\frac{\Pi_{_{M, <}}\big(Vq_{_{M, \bot}}\big)}{1 + |y|^{M+1}}\right\|_{L^\infty} \leq \left(\|V(s)\|_{L^\infty(\Rb)} + \frac{1}{s} \right)\left\|\frac{q_{_{M, \bot}}}{1 + |y|^{M+1}}\right\|_{L^\infty}.$$
As for the control of $\Pi_{_{M, \bot}}\big( V q_{_{M,<}}) = \sum_{i \leq M} \Pi_{_{M, \bot}}\big(q_iV\psi_i) $, we argue as follows:\\
- If $M - i = 0\; \textup{mod}\; 2m$, we take $n = \frac{M-i}{2m}$ in the expansion \eqref{est:Vk} and write 
\begin{align*}
\Pi_{_{M, \bot}}\big( V q_i \psi_i\big) = \sum_{j = 1}^{n} \frac{q_i}{s^j}\Pi_{_{M, \bot}}\big(V_j\psi_i\big) + \Pi_{_{M, \bot}}\Big(\tilde{V}_{n}q_i \psi_i\Big),
\end{align*} 
where $|\tilde{V}_{n}(y,s)| \leq \frac{C(1 + |y|^{2m(n + 1)})}{s^{n + 1}}$. Using the fact that $\Pi_{_{M, \bot}}(y^n) = 0$ for $n \leq M$, we have $\sum_{j = 1}^n\Pi_{_{M,\bot}}(V_jq_i) = 0$. Hence, 
$$\left|\Pi_{_{M, \bot}}\big( V q_i \psi_i\big)\right| \leq \frac{C |q_i|}{s^{\frac{M - i}{2m} + 1}} \leq \frac{CA^M}{s^{\frac{M + 2}{2m}}}.$$
- If $M - i = l\; \textup{mod}\; 2m$ for some $l \in (1, 2m)$, we take $n = \frac{M - i - l}{2m}$ in the expansion \eqref{est:Vk} to deduce that 
$$\left|\Pi_{_{M, \bot}}\big( V q_i \psi_i\big)\right| \leq \frac{C |q_i|}{s^{\frac{M - i - l}{2m} + 1}} \leq \frac{CA^M}{s^{\frac{M + 2}{2m}}}.$$
This concludes the proof of Lemma \ref{lemm:PiMVq}.
\end{proof}

Concerning the control of $\Pi_{_{M, \bot}}(B(q))$, we claim the following.
\begin{lemma}[Estimate of $\Pi_{_{M, \bot}}(B(q))$] \label{lemm:PiMBq} Under the assumption of Proposition \ref{prop:dya}, we have 
\begin{equation}\label{eq:estPiMBq}
\left\|\frac{\Pi_{_{M, \bot}}(B(q))}{1 + |y|^{M+1}}\right\|_{L^\infty(\Rb)} \leq \frac{CA^{(M+2)^2}}{s^{\frac{M + 1 + \bar p}{2m}}} \quad \textup{with} \; \bar p = \min\{2,p\}.
\end{equation}
\end{lemma}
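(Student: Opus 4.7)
My plan is to mirror the strategy used just above in the proof of the lemma on $\Pi_{_{M,\bot}}(Vq)$, adapting it to the nonlinear $B(q)$ via the expansion \eqref{eq:exBq}. First, I split the domain into the inner region $\Omega_{\textup{in}} = \{|y| \leq s^{1/(2m)}\}$ and outer region $\Omega_{\textup{out}} = \{|y| \geq s^{1/(2m)}\}$. On $\Omega_{\textup{out}}$ the estimate is cheap: by the pointwise bound \eqref{eq:Bqbound} and the uniform control $|q(y,s)| \leq CA^{M+2} s^{-1/(2m)}$ from Remark \ref{remark:1}, we have $|B(q)| \leq CA^{\bar p(M+2)} s^{-\bar p/(2m)}$, and since $1 + |y|^{M+1} \geq s^{(M+1)/(2m)}$ on $\Omega_{\textup{out}}$, dividing produces the target rate. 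For the projection piece I would write $\Pi_{_{M,\bot}} = \textup{Id} - \Pi_{_{M,<}}$; the finite-rank part $\Pi_{_{M,<}}(B(q)) = \sum_{k=0}^{M} \psi_k(y) \langle B(q), \psi_k^*\rangle$ is a polynomial of degree $\leq M$, and each scalar product is controlled by pairing $B(q)$ against the exponentially decaying $\psi_k^*$ and using the bounds on $B(q)$ already obtained.

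On $\Omega_{\textup{in}}$ the essential idea is to extract from \eqref{eq:exBq} the polynomial part of $B(q)$ that is annihilated by $\Pi_{_{M,\bot}}$. Decomposing $q = q_< + q_{_{M,\bot}}$ with $q_< = \sum_{k=0}^{M} q_k(s)\psi_k$ and expanding $q^j$ binomially, the dominant contributions are polynomials of the form $q_<^j\, B_{j,l}(ys^{-1/(2m)})/s^l$, of degree at most $jM + l$ in $y$. I separate each such polynomial into its degree-$\leq M$ part (killed by $\Pi_{_{M,\bot}}$) and its degree-$(>M)$ tail, and absorb the extra powers via the elementary estimate $|y|^n \leq (1+|y|^{M+1})\, s^{(n-M-1)/(2m)}$, valid on $\Omega_{\textup{in}}$ for $n \geq M+1$. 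Combined with the $\Vc_A$-bounds on the components $q_k$, this produces the $(1+|y|^{M+1})$-weighted rate of the right order. The cross terms in $q^j$ involving at least one factor of $q_{_{M,\bot}}$ already carry the weight $(1+|y|^{M+1})$ from the $\Vc_A$-bound on $q_{_{M,\bot}}$, and the residuals $\tilde{B}_{j,l}q^j$, $C|q|^{M+2}$, $Cs^{-M-1}$ in \eqref{eq:exBq} are handled by the uniform control on $q$ together with the explicit smallness of $\tilde{B}_{j,l}$.

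The main obstacle I anticipate is the combinatorial bookkeeping needed at the quadratic order $j = 2$ to reproduce exactly the exponent $s^{-(M+1+\bar p)/(2m)}$. The subtle point is that for $p \in (1,2)$ the nonlinearity $B$ is only $C^{1,p-1}$ near the origin, so the formal Taylor expansion \eqref{eq:exBq} cannot be used as such at the quadratic level and the genuine nonlinear contribution must be majorized by the pointwise bound \eqref{eq:Bqbound} instead; this is precisely the mechanism forcing the minimal exponent $\bar p = \min\{2,p\}$ in the final estimate rather than $2$. The crude amplification $A^{(M+2)^2}$ simply records the cost of pointwise bounding up to $M+2$ factors of $q$, each of which is controlled by $A^{M+2}$ via Remark \ref{remark:1}.
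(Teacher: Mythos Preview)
Your approach is essentially the paper's own: define the polynomial approximant $B_M := \Pi_{_{M,<}}\big[\sum_{j=2}^{M+1} q^j \sum_{l=0}^{M} s^{-l} B_{j,l}(ys^{-1/(2m)})\big]$, prove the pointwise bound $|B(q) - B_M| \leq CA^{(M+2)^2} s^{-(M+1+\bar p)/(2m)}(1+|y|^{M+1})$ by splitting into $\Omega_{\textup{in}}$ and $\Omega_{\textup{out}}$, and conclude using $\Pi_{_{M,\bot}}(B_M)=0$ together with the fact that $\Pi_{_{M,\bot}}$ is bounded in the weighted norm $\|\,\cdot\,/(1+|y|^{M+1})\|_{L^\infty}$. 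Your handling of the polynomial tail, the $\tilde B_{j,l}$ residuals, the cross terms with $q_{_{M,\bot}}$, and the $|q|^{M+2}$ remainder is correct.

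One point deserves correction, however. Your diagnosis that for $p\in(1,2)$ the expansion \eqref{eq:exBq} ``cannot be used as such at the quadratic level'' is mistaken. On the inner region $|y|\leq s^{1/(2m)}$ the profile satisfies $0<c_0\leq\varphi\leq C_0$, and since $|q|$ is small there, $q+\varphi$ stays bounded away from zero; hence $h\mapsto h^p$ is $C^\infty$ in a neighborhood of $\varphi$, and \eqref{eq:exBq} is a genuine Taylor expansion to arbitrary order (this is exactly how Lemma~\ref{lemm:exBq} is proved). Consequently, on $\Omega_{\textup{in}}$ every term in your decomposition contributes at least $s^{-(M+3)/(2m)}$, which is \emph{better} than the target. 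The exponent $\bar p=\min\{2,p\}$ enters only through the outer-region estimate: there one has no lower bound on $\varphi$, so one must fall back on \eqref{eq:Bqbound}, giving $|B(q)|\leq C|q|^{\bar p}\leq CA^{\bar p(M+2)}s^{-\bar p/(2m)}$, and dividing by $1+|y|^{M+1}\geq s^{(M+1)/(2m)}$ on $\Omega_{\textup{out}}$ produces exactly $s^{-(M+1+\bar p)/(2m)}$. So $\bar p$ is an outer-region artifact, not a quadratic-level obstruction on $\Omega_{\textup{in}}$.
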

\begin{proof} Let $B_M$ be defined by 
\begin{equation}\label{def:BM}
B_M(y,s) = \Pi_{_{M, <}} \left[\sum_{j = 2}^{M+1}q^j\sum_{l = 0}^{M} \frac{1}{s^l}B_{j,l}(ys^{-\frac{1}{2m}}) \right],
\end{equation}
where $\Pi_{_{M,<}} = \textup{Id} - \Pi_{_{M, \bot}}$ and $B_{j,l}$'s are introduced in Lemma \ref{lemm:exBq}. Then we claim the following: for all $y \in \Rb$ and $s \geq 1$,
\begin{equation}\label{eq:claimBBM}
\left|B(q) - B_M\right| \leq \frac{CA^{(M+2)^2}}{s^{\frac{M + 1 + \bar p}{2m}}}(1 + |y|^{M+1}).
\end{equation} 
The estimate \eqref{eq:estPiMBq} simply follows from \eqref{eq:claimBBM} since $\Pi_{_{M, \bot}}(B_M) = 0$. Let us prove \eqref{eq:claimBBM}. In the region $|y| \geq s^\frac{1}{2m}$, we use \eqref{eq:Bqbound} and the bounds given in Definition \ref{def:VA} to estimate 
$$|B(q)| \leq \frac{CA^{\bar{p}(M+2)}}{s^\frac{\bar p}{2m}} \leq \frac{CA^{\bar p(M+2)}}{s^\frac{M + 1 + \bar p}{2m}} (1 + |y|^{M+1}) \quad \textup{for} \;\; |y| \geq s^\frac{1}{2m}.$$
In the region $|y| \leq s^\frac{1}{2m}$, we write from Lemma \ref{lemm:exBq} the expansion
\begin{align*}
&\left|B(q) - \Pi_{_{M,<}}\left(\sum_{j = 2}^{M+1}q^j\sum_{l = 0}^{M} \frac{1}{s^l}B_{j,l}(ys^{-\frac{1}{2m}})  \right)\right|\\
& \quad \leq \left|\Pi_{_{M, \bot}}\left(\sum_{j = 2}^{M+1}q^j\sum_{l = 0}^{M} \frac{1}{s^l}B_{j,l}(ys^{-\frac{1}{2m}})\right)\right| + \left|\sum_{j = 2}^{M+1}q^j\sum_{l = 0}^{M} \frac{1}{s^l} \tilde{B}_{j,l}(y,s) \right| + |q|^{M+2} + \frac{C}{s^{M+1}}.
\end{align*}
A similar argument as in the proof of Lemma \ref{lemm:projBq} shows that the coefficient of degree $k \geq M+1$ of the polynomial $\sum_{j = 0}^{M+1}q^j\sum_{l = 0}^{M} \frac{1}{s^l}B_{j,l}(ys^{-\frac{1}{2m}})$ is bounded by $\frac{A^k}{s^\frac{k + 2}{2m}}$, hence, 
$$\left|\Pi_{_{M, \bot}}\left(\sum_{j = 2}^{M+1}q^j\sum_{l = 0}^{M} \frac{1}{s^l}B_{j,l}(ys^{-\frac{1}{2m}})\right)\right| \leq \frac{CA^{2(M+1)}}{s^{\frac{M+3}{2m}}}(1 + |y|^{M+1}), \quad \forall |y| \leq s^\frac{1}{2m}.$$
From \eqref{est:qVA} and Remark \ref{remark:1}, we obtain 
$$\left|\sum_{j = 2}^{M+1}q^j\sum_{l = 0}^{M} \frac{1}{s^l} \tilde{B}_{j,l}(y,s) \right| \leq  \frac{C A^{(M+2)(M+1)}}{s^{ \frac {1}{m} + \frac{M+1}{2m}}}(1 + |y|^{M+1}) \leq \frac{CA^{(M+2)^2}}{s^{\frac{M + 1 + \bar p}{2m}}}(1 + |y|^{M+1}), $$
and 
$$|q|^{M+2} \leq C\left(\frac{A^{M+2}}{s^\frac{1}{2m}}\right)^{M + 1} \frac{A^{M+2}}{s^{1 + \frac{1}{m}}}(1 + |y|^{m+1})\leq \frac{CA^{(M+2)^2}}{s^{\frac{M + 1 + \bar p}{2m}}}(1 + |y|^{M+1}).$$ 
This completes the proof of \eqref{eq:claimBBM} as well as the proof of Lemma \ref{lemm:PiMBq}.
\end{proof}

\medskip

Plugging \eqref{eq:estPiMR}, \eqref{eq:estPiMVq} and \eqref{eq:estPiMBq} into equation \eqref{eq:qM-} yields 
$$\partial_s q_{_{M, \bot}} = \Ls_m q_{_{M, \bot}} + G_{_{M,\bot}},$$
where $G_{_{M, \bot}}$ satisfies for $s$ large enough,
$$\left\| \frac{G_{_{M, \bot}}(y,s)}{1 + |y|^{M+1}}\right\|_{L^\infty(\Rb)} \leq \|V(s)\|_{L^\infty(\Rb)}\left\|\frac{q_{_{M, \bot}}(y,s)}{1 + |y|^{M+1}} \right\|_{L^\infty(\Rb)} + \frac{CA^M}{s^{\frac{M + 2}{2m}}}.$$
Using the semigroup representation by $\Ls_m$, we write for all $s \in [\tau, \tau_1]$,
$$q_{_{M, \bot}}(s) = e^{(s-\tau)\Ls_m}q_{_{M, \bot}}(\tau) + \int_\tau^se^{(s - s')\Ls_m} G_{_{M, \bot}}(s') ds',$$
where $e^{s\Ls_m}$ is defined in Lemma \ref{lemm:semigLm}. Letting $\lambda(s) = \left\|\frac{q_{_{M, \bot}}(s)}{1 + |y|^{M+1}}\right\|_{L^\infty(\Rb)}$ and using $(iii)$ of Lemma \eqref{lemm:semigLm}, we have 
\begin{align*}
 \lambda(s) &\leq e^{-\frac{M}{2m}(s - \tau)}\lambda(\tau) + \int_{\tau}^s e^{-\frac{M}{2m}(s - s')}\left\|\frac{G_{_{M, \bot}}(s')}{1 + |y|^{M+1}}\right\|_{L^\infty(\Rb)}ds'\\
& \quad \leq e^{-\frac{M}{2m}(s - \tau)}\lambda(\tau) + \int_{\tau}^s e^{-\frac{M}{2m}(s - s')}\left(\|V(s')\|_{L^\infty(\Rb)}\lambda(s') + \frac{CA^M}{s'^{\frac{M+2}{2m}}} \right)ds'.
\end{align*}
Since we have fixed $M$ large such that $\|V\|_{L^\infty_{y,s}} \leq \frac{M}{4m}$ (see \eqref{def:M}), a standard Gronwall's argument applied to the function $e^\frac{Ms}{2m}\lambda(s)$ yields
$$e^{\frac{Ms}{2m}} \lambda(s) \leq e^{\frac{M(s -\tau)}{4m}}e^{\frac{M\tau}{2m}}\lambda(\tau) + Ce^{\frac{Ms}{2m}} \frac{A^M}{s^\frac{M+2}{2}},$$
from which we conclude the proof of \eqref{eq:contrq-M}.\\

\paragraph{\underline{Control of $q_e$}.} We write from \eqref{eq:q} the equation satisfied by $q_e = (1 - \chi(y,s))q$,
\begin{align*}
\partial_s q_e &= \left(\Ls_m  - \frac{p}{p-1}\right)q_e + (1 - \chi)(Q + R) + E_1 + E_2,
\end{align*}
where $R$ is defined by \eqref{def:R},
\begin{align*}
Q &= |q + \varphi|^{p-1}(q + \varphi) - \varphi^p,\quad E_1 = - q \big(\partial_s \chi + \Am\chi + \frac{1}{2m}y\cdot \nabla \chi\big),\\
E_2 &= \Am(\chi q) + q\Am \chi - \chi \Am q = \sum_{j = 1}^{2m - 1}c_j \nabla^j \big(q \nabla^{2m - j} \chi\big) .
\end{align*}
Using the semigroup representation by $\Ls_m$ and items $(i)-(ii)$ of Lemma \ref{lemm:semigLm}, we write for all $s \in [\tau, \tau_1]$,
\begin{align*}
\|q_e(s)\|_{L^\infty} &\leq e^{-\frac{s - \tau}{p-1}}\|q_e(\tau)\|_{L^\infty} + \int_{\tau}^s e^{-\frac{s - s'}{p-1}}\left\|(1 - \chi)(Q(s') + R(s')) + E_1(s') \right\| _{L^\infty}ds'\\
& \quad + C\sum_{j = 1}^{2m-1}\int_{\tau}^s e^{-\frac{s - s'}{p-1}} \big(1 - e^{(s - s')}\big)^{-\frac{j}{2m}}\left\|q(s') \nabla^{2m - j}\chi(s')\right\|_{L^\infty}ds'.
\end{align*}
For $K$ large enough, we have 
$$\| (1 - \chi(y,s')Q(s'))\|_{L^\infty} \leq C\|\varphi(s')\|_{L^\infty}^{p-1}\|q_e(s')\|_{L^\infty} \leq \frac{1}{2(p-1)}\|q_e(s')\|_{L^\infty}.$$
Recall from Lemma \ref{lemm:exR} that $\|R(s')\|_{L^\infty} \leq \frac{C}{s'}$. As for $E_1$, we use the definition of $\chi$ given in \eqref{def:chi} and the  bounds given in Definition \ref{def:VA} to obtain the estimate 
$$\|E_1(s')\|_{L^\infty} \leq C\|q(s')\|_{L^\infty\big(Ks'^{\frac 1{2m}}\leq |y| \leq 2Ks'^{\frac{1}{2m}} \big)} \leq \frac{CA^{M+1}}{s'^{\frac{1}{2m}}}.$$
Since $\|\nabla^{2m - j}\chi( s')\|_{L^\infty} \leq Cs'^{-\frac{2m - j}{2m}}$, using the fact that $\nabla^{2m - j}\chi$ is compactly supported, we estimate
$$\sum_{j = 1}^{2m-1}\left\|q(s') \nabla^{2m - j}\chi(s')\right\|_{L^\infty} \leq \frac{CA^{M+1}}{s'^{\frac{1}{m}}}.$$
A collection of these above estimates yields 
\begin{align*}
\|q_e(s)\|_{L^\infty} &\leq e^{-\frac{s - \tau}{p-1}}\|q_e(\tau)\|_{L^\infty}\\
& + \int_{\tau}^s e^{-\frac{s - s'}{p-1}} \left( \frac{1}{2(p-1)}\|q_e(s')\|_{L^\infty} + \frac{CA^{M+1}}{s'^{\frac{1}{2m}}} +   \frac{CA^{M+1}}{\Big[s'\sqrt{1 - e^{(s - s')}} \, \Big]^{\frac 1m}}\right)ds'.
\end{align*}
Applying the standard Gronwall's inequality to the function $e^\frac{s}{p-1}\|q_e(s)\|_{L^\infty}$ yields
$$e^\frac{s}{p-1}\|q_e(s)\|_{L^\infty} \leq e^{\frac{s - \tau}{2(p-1)}}e^{\frac{\tau}{p-1}}\lambda(\tau) + \frac{CA^{M+1}}{s^\frac{1}{2m}}(s-\tau + 1),$$
from which the estimate \eqref{eq:contr:qe} follows. This completes the proof of Proposition \ref{prop:dya}. \hfill $\square$


\def\cprime{$'$}

\end{document}